\keywords{fibration, Chern classes, Viehweg weak 
 positivity, moduli of curves, gonality} 
\subjclass{Primary 14D22, 14H51; Secondary 14H10}
\theoremstyle{plain}
\newtheorem{thm}{Theorem}[section]
\newtheorem{prop}[thm]{Proposition}
\newtheorem{cor}[thm]{Corollary}
\newtheorem{lem}[thm]{Lemma}
\theoremstyle{definition}
\newtheorem{defn}[thm]{Definition}
\newtheorem{nota}[thm]{Notation}
\theoremstyle{remark}
\newtheorem*{rem}{Remark}
\newcommand{\B}{\mathcal{B}}
\newcommand{\sE}{\mathcal{E}}
\newcommand{\E}{\mathcal{E}}
\newcommand{\sF}{\mathcal{F}}
\newcommand{\F}{\mathcal{F}}
\newcommand{\sG}{\mathcal{G}}
\newcommand{\sH}{\mathcal{H}}
\newcommand{\sI}{\mathcal{I}}
\newcommand{\I}{\mathcal{I}}
\newcommand{\sO}{\mathcal{O}}
\newcommand{\sT}{\mathcal{T}}
\newcommand{\sV}{\mathcal{V}}
\newcommand{\sW}{\mathcal{W}}
\newcommand{\mC}{\mathbb{C}}
\newcommand{\mP}{\mathbb{P}}
\newcommand{\mQ}{\mathbb{Q}}
\newcommand{\Qmath}{\mathbb{Q}}
\newcommand{\codim}{\mathrm{codim}\,}
\newcommand{\wdt}        {\widetilde}
\numberwithin{equation}{section}
\newcommand{\beba}  {\begin{equation}\begin{array}{rcl}}
\newcommand{\eaee}  {\end{array}\end{equation}}
\newcommand{\mfour} {{\overline {\mathcal M}}^{1}_{g,4}}
\newcommand{\mngon} {{\overline {\mathcal M}}^{1}_{g,n}}
\title{On the slope of fourgonal semistable fibrations}
\author{ Beorchia Valentina and Zucconi Francesco}
\address{Dipartimento di Matematica e Geoscienze,
               Universit\`a degli studi di Trieste\\
via Valerio 12/b, 34127 Trieste, Italy\\
\texttt{beorchia@units.it}}
\address{Dipartimento di Matematica e Informatica\\
via delle Scienze 206,
Universit\`a degli studi di Udine\\
Udine, 33100 Italy\\
\texttt{Francesco.Zucconi@uniud.it}}
\begin{document}

\maketitle

\markboth{Beorchia and Zucconi}{fourgonal fibrations}

\begin{abstract} We bound the slope of sweeping curves $B$ in the $4$-gonal locus $\mfour$. Our results follow from some Bogomolov-type inequalities for nef rank two vector bundles on ruled surfaces.

\end{abstract}
 \tableofcontents

\section{Introduction}

In this paper we are concerned with semistable fibrations $f\colon S\to B$, that is flat surjective morphisms between a smooth surface $S$ and a smooth curve $B$ such that, for every 
$b\in B$, the fiber $F_b:=f^{-1}(b)$ is a semistable curve and if $b\in B$ is general then $F_b$ is smooth and of genus $g$. 


Following Xiao \cite{X}, we can associate with $f\colon S \to B$ a rational number $s(f)$, called the slope of $f\colon S \to B$, and defined as follows:
$$
s(f): =\frac{K^2_{f}}{{\chi_f}}
$$
\noindent 
where $K_f=K_S -f^\star K_{B}$ is the relative canonical divisor, 
$\chi _f : = {\rm deg}\ f_\star  \omega _f $, and $\omega _f := {\sO}_S(K_f)$.

The starting point of our study is the celebrated Cornalba-Harris {\it{slope inequality}}; see: \cite {CH}, which asserts that if $f\colon S\to B$ is a family of semistable curves of genus $g$ over an integral complete curve and with smooth general fiber, then 
$s(f)\geq 4+\frac{4}{g}$. Moreover, if $s(f)=4+\frac{4}{g}$ and $g\geq 3$, then the modular image of $B$ is contained in the hyperelliptic locus, c.f. \cite[Theorem 8.4 page 391]{ACG}.
The same inequality for not necessarily semistable families has been proved by Xiao: \cite[Sec. 3 Theorem 2, p. 459]{X}.

A nice introduction of the main known results related to slope inequalities can be found in
\cite[page 438]{ACG}. Here we briefly recall that in \cite{S} Stankova 
showed that
 if $f\colon S\to B$ is a semistable fibration such that the general fiber is a smooth trigonal curve, then
$s(f)\ge {24(g-1)  \over 5g+1}$, and equality holds if and only if all fibers are irreducible, $S$ is a triple cover of a ruled surface $Y$ over $B$ and the ramification divisor, $R$, of the cover $\rho\colon S \to Y$ satisfies $R\equiv {1\over 3} \rho^\star \rho_\star R$. 
Under certain restrictive assumptions on the singular fibres, she also gave the better bound $s(f)\ge  5 - {6  \over g}$ if $g$ is even and the Maroni invariant of the general fiber is zero. 
Recently Barja-Stoppino \cite{BS}, and Fedorchuk-Jensen \cite{FJ} showed that if $g$ is even the bound
 $s(f)\ge  5 - {6  \over g}$ holds for zero Maroni invariant, without the hypothesis of semistability
 and with no assumptions on the singular fibers.
Finally, the trigonal odd genus case for sweeping families in the trigonal locus of the moduli space has been considered
in \cite{DP}.

\subsection{Our result}\label{ourresult}
\bigskip
Concerning the slope of fourgonal fibrations
the bounding problem is widely open. 
There are estimates on $s(f)$ by Barja-Zucconi \cite{BZ}) and by Cornalba-Stoppino \cite{CS}, \cite{St} for fibrations, which factorise through a double cover of a hyperelliptic fibration.
In this paper we establish some bounds on the slope of semistable fourgonal fibrations 
$f\colon S \to B$ with smooth general fiber, whose modular image is not contained in some specific loci. For instance, 
if $g$ is odd and the modular image in not contained in three divisors, then we prove that
$$
s(f)\ge  {16(g-1)\over 3g+1}.
$$


For the definition of such loci and for a precise statement of our Main Theorem we need to describe our construction and to introduce some notation. 

Given $f\colon S \to B$, it is well known that, up to a finite base-change, $f$ factorises through a rational 
map $\rho\colon S\dashrightarrow Y$, where $\pi_{B}\colon 
Y\rightarrow B$ is a ruled surface; see Theorem \ref{fattorizzazione}. We resolve $\rho\colon S\dashrightarrow Y$
and we get a commutative diagram:
\begin{equation}
\label{eq:familyfamiglia}
\xymatrix{X \ar[d]_{\tau}
\ar[r]^{\pi}  &  Y \ar[d]^{\pi_{B}}\\
S \ar[r]^{f} & B,} 
\end{equation}\noindent
where $\pi \colon X \to Y$ is a {\it generically finite} morphism of degree $4$ and branched over a divisor $B(\pi)$ of $Y$.  
Now let $X\to{\widehat{X}}\to Y$ be the Stein factorisation of $\pi \colon X \to Y$ and let $\widehat\pi\colon\widehat X\to Y$ be the {\it{finite}} 
factorisation morphism. It turns out that if the modular image of $f$ is a curve $B 
\subset \mfour$, which does not interesect a certain proper subscheme $\Xi \subset \mfour$ (see Section 4.), the Stein factorisation of $\pi\colon X\to Y$ is a Gorenstein 
cover (see Definition \ref{gorensteincover}).
 Nevertheless, our theory allows to 
 estimate $s(f)$ under the weaker assumption that $\pi \colon X \to Y$ is finite over any
 point $p\in B(\pi)$ such that $p$ is {\it{not}} a simple node; and in this case we will say that $B\subset \mfour$
is a curve with {\it{good Gorenstein factorisation}}; see Definition \ref{defonozioneHM} and Subsection \ref{defonozioneHM}. We remark that there exist sweeping families satisfying such a condition, like for instance the Harris - Morrison families constructed in \cite[Theorem 2.5]{HMo}. 
 
With such an assumption, we find a partial resolution $\wdt\tau\colon \wdt X\to {\widehat {X}}$ 
and obtain the following diagram:

\begin{equation}
\label{eq:familyfamigliafamiglia}
\xymatrix{\wdt X \ar[d]_{\wdt\tau}
\ar[r]^{\wdt\pi}  &  \wdt Y \ar[d]^{\sigma\circ\pi_{B}}\\
{\widehat{X}} \ar[r]^{\pi_{B}\circ\widehat\pi} & B,} 
\end{equation}\noindent
where $\wdt\pi\colon\wdt X\to\wdt Y$ is a degree $4$ Gorenstein cover, $\sigma\colon \wdt Y\to Y$
 is the blow up of $Y$ at suitable points, and if we set $\wdt f := \pi_{B}\circ\widehat\pi\circ \wdt\tau$, then 
$\wdt f\colon\wdt X\to B$ is a (not necessarily semistable) fibration, whose slope can be bounded. Finally we show that $s(f)\geq s(\wdt f)$.

The bound on $s(\wdt f)$ relies on the theory of Gorenstein covers. Indeed, there is a projective bundle $\mP(\wdt\sE)$ 
over $\wdt Y$ and an embedding $\wdt j\colon\wdt X\to \mP(\wdt \sE)$,
which allows to express $s(\wdt f)$
in terms of the Chern classes of $\wdt \sE$ and 
of the rank two Casnati-Ekedahl {\it bundle of conics} 
$\wdt \sF :=\wdt\pi_\star \sI _{\wdt X, \mP(\wdt\sE)} (2)$.


Next we define the {\it Casnati-Ekedahl locus} ${\rm CE} (\mfour)$, which is the subscheme of $\mfour$ given by:
$$
{\rm CE} (\mfour):={\overline{ \{ [F]\in \mfour |\,\,  \sF_F \,\,{\rm{is\, not\, balanced}}\}}}
$$
\noindent
where a rank $r$ vector bundle $\sH$ on $\mP^1$ is called {\it balanced} if for the sequence of integers $(a_1,\dots,a_r)$ given by its splitting into the direct sums of line bundles, 
$\sH \cong \bigoplus _{i=1}^r \sO _{\mP^1} (a_i)$, it holds that:
$|a_i - a_j| \le 1$ where $i\neq j$ and $i,j=1,...,r$.

If $g \ge 10$, the codimensions in $\mfour$ of the Casnati-Ekedahl loci are $1$ if $g$ is odd, and $2$ if $g$ is even, see Theorem \ref{brusac}.

Finally, we set $T\subset \mfour$ to be divisor of curves with a triple ramification, $D\subset \mfour$ the divisor of double simple ramification, and $\Upsilon \subset \mfour$ the subscheme corresponding to points of the boundary divisor $\delta\subset \mfour$ given by stable curves with a ramification in a singular point, or by stable curves with two or more nodes (see Definition \ref{luoghidaevitare}). We will say that the fibration satisfies the condition $(\dagger)$ if 

\bigskip
$(\dagger)$
$B\cap \Upsilon =\emptyset$ and if $Y$ has an irreducible negative section contained in the branch divisor, there are 
no triple nor total ramification points for $\pi:X\to Y$ over any of its points.

\bigskip

In the present paper we prove that:
\medskip

\noindent
{\bf{Main Theorem:}}{\it{ Let $f:S \to B$ be a
 semistable fourgonal fibration with good Gorenstein factorisation, and assume
 that the general fiber $F$ has genus $g \ge 10$. Denote again by $B\subset \mfour$ its modular image and assume that $B\not\subset T$ and $B\not\subset D$. Then:

\begin{enumerate}
\item
if $g$ is odd and $B\not\subset {\rm CE} (\mfour)$, then $s(f) \ge {16(g-1)\over 3g+1}$;

 \item
 if $g$ is even, $B\not\subset {\rm CE} (\mfour)$ and the condition $(\dagger)$ is satisfied, then $s(f) \ge {16(g-1)\over 3g+2} $;
 
 \item if the $4$-gonal morphism $h\colon F\to \mP^1$ does not factorise and the condition $(\dagger)$ is satisfied, then $s(f)\ge {24(g-1)\over(5g+3)},$
 with equality if and only if $f\colon S \to B$ factorises through a finite degree four cover $\pi\colon S\to Y$ of a ruled surface $Y\to B$,
whose ramification divisor $R$ satisfies $R \equiv {1\over 4} \pi ^\star \pi _\star R$;

\item
if $h\colon F\to \mP^1$ factorises 
 through a double cover of a hyperelliptic curve of genus $\gamma <{(g-3)\over 6} $ and the condition $(\dagger)$ is satisfied, then $s(f) \ge {4(g -1)\over (g-\gamma)}.$
 \end{enumerate}}}
\medskip

%
%
%


%
\begin{rem}
We remark that A. Patel, in his Ph. D. thesis, has proved that for 
$g\equiv 3 \mod 6$ the slope of a sweeping $4$-gonal family not contained entirely in the divisors $T$, $D$, ${\rm CE} (\mfour)$ and in the Maroni divisor 
$M(\mfour)$ (see Remark \ref{tesipatel}) is
bounded below by $s(f)\ge {11\over 2} - {15\over 2g}$. We prove in Remark \ref{tesipatel} that such a statement is consistent with our results.
\end{rem}


\subsection{The contents of each section}

In Section $2$, we recall some basic results of the theory of finite Gorenstein covers. For a fibration $f:S\to B$, which factors through a finite Gorenstein cover $\pi:S \to Y$ of a ruled surface over $B$, we express the slope of $f$ in terms of the $\pi$-relative canonical divisor and the Chern classes of the reduced direct image sheaf.

 We end Section $2$ with an important result (Theorem \ref{boundsc2}), establishing some 
 Bogomolov-type inequalities between the Chern classes of a rank two vector bundle on a ruled surface, under the assumption that the vector bundle is weakly positive 
 outside a zero-dimensional 
 subscheme.

In Section $3$ we introduce the Casnati-Ekedahl bundle of conics associated with a degree four cover. Using the
Viehweg Weak Positivity Theorem \cite[3.4]{V1} we show that both the reduced direct image sheaf and the bundle of conics are weakly positive outside the branch locus of the cover and a zero-dimensional subscheme. Theorem \ref{boundsc2} and some additional arguments allow then to conclude when $S$ is a Gorenstein cover of a ruled surface.

 In Section $4$ we define the property of having a good Gorenstein factorisation. Under such an assumption we can estimate the invariants of $f \colon X\to B$ in terms of those of a family $\wdt f\colon\wdt X\to B$ where $\wdt X$ is Gorenstein; see Theorem \ref{gorensteinfour}. Then in Theorem \ref{theoremsnfour} we show that the bounds claimed in our Main Theorem hold for $s(\wdt f)$. We end Section $4$ with the proof of the Main Theorem, which is now a consequence of the fact that $S$ is a minimal model of $\wdt X$.

%
%
%


\ackn 
This research is supported by MIUR funds, 
PRIN project {\it Geometria delle variet\`a algebriche} (2015), 
coordinator A. Verra, and by GNSAGA of INdAM.

The first author is also supported by funds of the
Universit\`a degli Studi di Trieste - Finanziamento di Ateneo per
progetti di ricerca scientifica - FRA 2015.

The second author is also supported by funds of the
Universit\`a degli Studi di Udine - Finanziamento di Ateneo RICERCA LIBERA di ATENEO 2015.

The authors are grateful to M. {\'A}. Barja, L. Stoppino and E. Tenni for fruitful discussions had in September 2010, when they visited the Department of
Mathematics and Informatics in Trieste. The authors thank also P. Hawking,
M. Manetti, A. Verra, B. Fantechi and A. Alzati.

Finally, the authors thank the organizers of the Conference "New Trends in Algebraic Geometry", Universit\`a della Calabria, June 12-14 (2013), where the second author had the opportunity to present some of the results of this work.

We thank, also, the anonymous referee for the useful comments.


\section{Preliminary results}
In this paper a {\it fibration} $f\colon S \to B$ is a
flat proper surjective morphism, with smooth connected general fiber, from a surface with canonical singularities
 $S$ to a smooth curve $B$. Note that two dimensional canonical singularities are the same as du Val singularities, and they are analytically isomorphic to quotients of $\mathbb C^{2}$ by finite subgroups of $SL2(\mathbb C)$. In particular the canonical divisor is a Cartier divisor since they are rational Gorenstein singularities. We denote by $F_b$ the fiber over a point $b\in B$ and we denote by $g$ the genus of a general fiber. We need
to recall some results from the theory of Gorenstein covers.

Let $X$ and $Y$ be schemes. An affine morphism $\pi\colon X\to Y $
is called a {\it cover of degree $n$} if $\pi_\star\sO_X$ is a locally free sheaf of rank $n$; observe that
$\pi\colon X\to Y$ is a cover if and only if it is flat and finite. If $Y$ is smooth and $X$ is locally Cohen-Macaulay, then every finite surjective morphism is a cover.

There exists an exact sequence of the form $0\to \sO_Y\to \pi_\star \sO_X\to \sE^\vee\to 0$, where
$\sE ^\vee $ is a locally free $\sO_Y$ sheaf of rank $n-1$, called the {\it Tschirnahausen sheaf} of $\pi\colon X\to Y$. The above sequence splits and
$\pi_{\star}\sO_X = \sO_Y\oplus \sE^{\vee}$; see \cite{CE}. 

\subsection{Gorenstein Covers}
From now on we assume $Y$ to be a smooth surface. 

\begin{defn}\label{gorensteincover}
 A {\it Gorenstein cover} $\pi\colon S \to Y$ is a finite 
surjective morphism from a normal surface $S$ to a smooth surface $Y$
such that all $\pi$-fibers are Gorenstein schemes.
\end{defn}
The above definition is given in \cite{CE} in a more general set up. If the cover $\pi\colon S \to Y$
has Gorenstein fibers, then $S$ is Gorenstein and we have $(\pi_{\star}\sO_S)^\vee=\pi_\star\omega_{S/Y}$ \cite[exercise III 6.10]{Ha}, hence 
$\pi_\star\omega_{S/Y}=\sO_Y\oplus \sE$. We shall call $\sE$ the {\it reduced direct image} sheaf. We will indicate by $R$ the $\pi$-relative canonical divisor.

Finally, we shall denote by $\pi_Y\colon \mP(\sE)\to Y$ the
projective bundle associated with $\sE$. We recall the Casnati-Ekedahl Theorem:

\begin{thm}\label{Casnati}
Let $Y$ be a smooth surface and let $ \pi\colon S \to Y$ be a Gorenstein cover of degree $n\ge 3$. 
There exists a unique $\mP ^{n-2}$-bundle
$\pi _Y\colon \mP \to Y$ and an embedding $j\colon X \to \mP$ such that $\pi=\pi _Y \circ j$. Moreover
 $\mP\cong \mP (\sE)$ and $R$ satisfies:
$$
\sO _S (R) \cong j^\star \sO _{\mP (\sE)} (1).
$$
\end{thm}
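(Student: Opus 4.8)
The plan is to realize $\mP$ as the relative canonical projective bundle of the cover and to produce $i$ as the relative canonical embedding, carrying out fiberwise over $Y$ the classical construction of the canonical embedding of a curve.

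First I would record the structure of $\rho$. Since $\rho$ is a Gorenstein cover of degree $n$ it is finite and flat, so $\rho_\star \sO_S$ is locally free of rank $n$ and the relative dualizing sheaf $\omega_{S/Y}$ is invertible. The trace splits the unit $\sO_Y \to \rho_\star \sO_S$, giving the decomposition $\rho_\star \sO_S \cong \sO_Y \oplus \sE^\vee$ that defines the reduced direct image $\sE$ of rank $n-1$. Applying relative (Grothendieck) duality to the finite flat $\rho$ yields $\rho_\star \omega_{S/Y} \cong \sHom_{\sO_Y}(\rho_\star \sO_S, \sO_Y) \cong \sO_Y \oplus \sE$, where the distinguished summand $\sO_Y$ is the image of the trace section $\sigma \colon \sO_Y \to \rho_\star \omega_{S/Y}$ dual to the unit. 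I would also identify $\omega_{S/Y} \cong \sO_S(R)$: for a finite flat cover with $Y$ smooth this is the relative form of the Riemann--Hurwitz formula, the relative dualizing sheaf being cut out by the ramification (the different) $R$.

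The heart of the argument is the fiberwise canonical embedding. For a geometric point $y \in Y$ the fiber $Z = \rho^{-1}(y)$ is a Gorenstein zero-dimensional scheme of length $n$, and its dualizing sheaf $\omega_Z \cong \omega_{S/Y}|_Z$ satisfies $h^0(\omega_Z)=n$. Modding out the one-dimensional space spanned by the trace functional produces the \emph{reduced} canonical system, which maps $Z$ into $\mP^{n-2} = \mP(\sE_y)$. I would then prove that for $n \ge 3$ this reduced canonical map is a closed immersion: this is the zero-dimensional analogue of the very ampleness of the canonical system on a non-hyperelliptic curve, and it is precisely here that the hypothesis $n \ge 3$ enters, since for $n=2$ the target $\mP^0$ is a point. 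The verification uses the Gorenstein self-duality of the Artinian local rings of $Z$ to show that the reduced canonical sections generate $\omega_Z$ and separate the points and tangent directions of $Z$.

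Finally I would globalize. Cohomology and base change, together with the flatness of $\rho$, guarantee that $\rho_\star \omega_{S/Y}$ is locally free and commutes with base change, so the fiberwise picture spreads out: the composite $\rho^\star \sE \hookrightarrow \rho^\star \rho_\star \omega_{S/Y} \to \omega_{S/Y}$, obtained from the splitting and the evaluation map, is surjective and hence defines a $Y$-morphism $i \colon S \to \mP(\sE)$ with $\pi_Y \circ i = \rho$ and $i^\star \sO_{\mP(\sE)}(1) \cong \omega_{S/Y} \cong \sO_S(R)$. Since $i$ is a closed immersion on every fiber and $\rho$ is finite flat, $i$ is itself a closed immersion. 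Uniqueness of the pair $(\mP, i)$ follows because $\sE$, and hence $\mP(\sE)$, is recovered intrinsically from $\rho_\star \omega_{S/Y}$; any factorization through a $\mP^{n-2}$-bundle pulls $\sO(1)$ back to the same relative dualizing sheaf and so coincides with this one. The main obstacle is the fiberwise embedding step, that is, establishing very ampleness of the reduced canonical system on an arbitrary length-$n$ Gorenstein scheme; the remaining statements are formal consequences of duality and base change.
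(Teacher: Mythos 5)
The first thing to note is that the paper contains no proof of Theorem \ref{Casnati} at all: it is imported as a known structure theorem with the citation \cite{CE} and then used as a black box. So your proposal cannot be matched against an internal argument; what it does, in effect, is reconstruct the Casnati--Ekedahl proof itself, and for the existence part it does so correctly in outline. The skeleton is the right one: the trace splitting $\rho_\star \sO_S \cong \sO_Y \oplus \sE^\vee$, relative duality for the finite flat map giving $\rho_\star \omega_{S/Y} \cong \sHom_{\sO_Y}(\rho_\star \sO_S, \sO_Y) \cong \sO_Y \oplus \sE$, surjectivity of the evaluation $\rho^\star \sE \to \omega_{S/Y}$ checked on fibers, the induced $Y$-morphism $i\colon S \to \mP(\sE)$ with $i^\star \sO_{\mP(\sE)}(1) \cong \omega_{S/Y}\cong \sO_S(R)$, and the passage from fiberwise closed immersion to closed immersion via finiteness and Nakayama. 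You are also right that all the weight rests on the fiberwise lemma and that $n\ge 3$ enters exactly there. That lemma does go through along the lines you indicate: choosing a generator $g$ of $\omega_A$ with $g(1)=0$ and $g(\mathrm{socle})\neq 0$, the trace-zero sections correspond to a codimension-one subspace $W\subset A$ containing $1$ and meeting the socle trivially; since for a local Gorenstein algebra of length $\ge 3$ the perfect pairing forces $\mathrm{socle}\subset \mathfrak{m}^2$ (otherwise $\mathfrak{m}^2=0$ and the length is at most $2$), the subspace $W\cap\mathfrak{m}$ generates $A$ as an algebra, which is exactly the closed-immersion statement; the case of several local components is handled similarly.

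The step that is genuinely too thin is uniqueness. Your claim that ``any factorization through a $\mP^{n-2}$-bundle pulls $\sO(1)$ back to the same relative dualizing sheaf'' is not automatic, and without a nondegeneracy hypothesis on the fibers it is false --- as is the uniqueness clause read literally. Concretely, for $n=4$ take $S\in |\sO_{\mP(\sG)}(4)\otimes \pi^\star \sL|$ a smooth relative quartic in a $\mP^1$-bundle over $Y$, so that $\rho\colon S\to Y$ is a Gorenstein degree four cover, and re-embed $\mP(\sG)\hookrightarrow \mP(\sG\oplus\sL')$ linearly over $Y$: the composite embeds $S$ into a $\mP^2$-bundle factoring $\rho$, but every fiber of $S$ lies in a line of $\mP^2$, so this factorization cannot be isomorphic to the Casnati--Ekedahl one (whose fibers span), and one checks (e.g.\ by Grothendieck--Lefschetz on an ample $S$) that here $i^\star\sO(1)=\sO_{\mP(\sG)}(1)|_S$ is not $\omega_{S/Y}$ up to a twist from $Y$. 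The statement in \cite{CE} includes the requirement that the fibers $S_y\subset \mP_y\cong\mP^{n-2}$ span $\mP_y$ (a condition silently dropped in the paper's quotation), and the uniqueness proof consists precisely in showing that for a spanning embedding the hyperplane series on each fiber is forced to be the reduced dualizing system, so that the surjection $\rho^\star\sE'\to i^\star\sO(1)$ is identified with the canonical one. To complete your proof you must add that spanning hypothesis and supply this identification; with that amendment, the rest of your outline is sound.
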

\begin{proof} See \cite[Theorem 1.3]{CE}.
\end{proof}

For the rest of this and the next subsection we assume that 
$$
n \le 4
$$ 
and that
$S$ is a normal Gorenstein surface. In this case, as $\pi\colon S\to Y$ factorises through
the closed embedding $j\colon S\to \mP$ and the projective morphism $\pi _Y\colon \mP \to Y$, and
since a codimension $1$ or $2$ Gorenstein subscheme of a smooth variety is a
local complete intersection (see, for instance, \cite[21.10, p. 537]{E}), then $\pi\colon X\to Y$ is a local complete intersection (l.c.i. to short) morphism. Then
we can apply the
Grothendieck-Riemann-Roch theorem for singular varieties and proper l.c.i. morphisms 
(see: \cite[Corollary 18.3.1 (c), page 354]{Fu})
to write the invariants of $S$ in terms of the invariants of $Y$ and of the Chern classes of $\sE$. 

Let us denote by $A(Y)$ the Chow ring of $Y$ and by $\equiv$ the numerical equivalence.

\begin{lem}\label{scriviamolo}
Let $S$ be a normal Gorenstein surface and let $Y$ be a smooth surface. Let $ \pi\colon S \to Y$ be a finite morphism of degree $n$, 
and let $\sE$ be its reduced direct image sheaf.
Then in $A(Y) \otimes_\mathbb Z \Qmath$ we have:
\begin{enumerate}
\item \label{R} $\pi _  \star R \equiv 2 c_1 (\E)$,
\item \label{chi} $ \chi (\sO _S)= n \chi (\sO _Y) +{1\over 2} c_1 (\E) \cdot K_Y +{1\over 2} c_1 (\E)^2 - c_2 (\E)$;
\item \label{c12R}
$c_1 (\pi_\star \sO _S (2R))\equiv 3 c_1 (\E)$,

\item \label{c22R} $c_2 (\pi_\star \sO _S (2R)) = 4c_1 (\E) ^2 + c_2 (\E) -R^2$.
\end{enumerate}
\end{lem}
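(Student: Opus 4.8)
The engine of the whole computation is the Grothendieck--Riemann--Roch Theorem applied to the finite flat morphism $\rho$. Because $\rho$ is finite and flat one has $R^i\rho_\star=0$ for $i>0$, so $\rho_!=\rho_\star$ in $K$-theory and GRR reads
\[
\mathrm{ch}(\rho_\star\sF)\cdot\mathrm{td}(Y)=\rho_\star\bigl(\mathrm{ch}(\sF)\cdot\mathrm{td}(S)\bigr)
\]
for every coherent sheaf $\sF$ on $S$. The plan is to feed this identity with $\sF=\sO_S$ to get \eqref{R} and \eqref{chi}, and with $\sF=\sO_S(2R)$ to get \eqref{c12R} and \eqref{c22R}. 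The two structural inputs I would use in each case are the ramification (Hurwitz) relation $K_S\equiv\rho^\star K_Y+R$, coming from $\omega_{S/Y}\cong\sO_S(R)$, together with the projection formula, which yields $\rho_\star\rho^\star K_Y\equiv nK_Y$ and more generally lets me transfer intersection numbers from $S$ to $Y$.

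First I would settle \eqref{R} and \eqref{chi}. Writing $\rho_\star\sO_S\cong\sO_Y\oplus\E^\vee$, so that $c_1(\rho_\star\sO_S)=-c_1(\E)$ and $c_2(\rho_\star\sO_S)=c_2(\E)$, I expand GRR for $\sF=\sO_S$ degree by degree. In codimension one the equation reads $c_1(\rho_\star\sO_S)-\tfrac n2K_Y=\rho_\star(-\tfrac12K_S)$; substituting $\rho_\star K_S\equiv nK_Y+\rho_\star R$ and cancelling the $K_Y$ terms leaves $-c_1(\E)=-\tfrac12\rho_\star R$, which is \eqref{R}. In codimension two, integrating over $Y$ and applying Noether's formula $\chi(\sO_Y)=\tfrac1{12}(K_Y^2+c_2(Y))$ on source and target collapses the identity to \eqref{chi}; equivalently one reads \eqref{chi} off directly from $\chi(\sO_S)=\chi(\sO_Y)+\chi(\E^\vee)$ via Hirzebruch--Riemann--Roch for the rank $n-1$ bundle $\E^\vee$.

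For \eqref{c12R} and \eqref{c22R} I would repeat the computation with $\sF=\sO_S(2R)$, so $\mathrm{ch}(\sO_S(2R))=e^{2R}=1+2R+2R^2+\cdots$. In codimension one the right hand side becomes $2\rho_\star R-\tfrac12\rho_\star K_S\equiv4c_1(\E)-\tfrac12(nK_Y+2c_1(\E))=3c_1(\E)-\tfrac n2K_Y$, which against $c_1(\rho_\star\sO_S(2R))-\tfrac n2K_Y$ gives \eqref{c12R}. In codimension two the right hand side is $\rho_\star\bigl(2R^2-R\cdot K_S+\tfrac1{12}(K_S^2+c_2(S))\bigr)$; here I would rewrite $R\cdot K_S=R\cdot\rho^\star K_Y+R^2\equiv2c_1(\E)\cdot K_Y+R^2$ by projection formula and identify $\rho_\star\tfrac1{12}(K_S^2+c_2(S))$ with $\chi(\sO_S)$. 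Substituting \eqref{chi} for $\chi(\sO_S)$ and $c_1(\rho_\star\sO_S(2R))=3c_1(\E)$ from \eqref{c12R} into the left hand side, the $c_1(\E)\cdot K_Y$ and $\chi(\sO_Y)$ contributions cancel on both sides, leaving exactly \eqref{c22R}.

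The computations are mechanical once GRR is in hand, so the issues are of bookkeeping rather than of depth. I must ensure that $\rho_\star\sO_S(2R)$ is genuinely locally free of rank $n$ — which is where finiteness and flatness of $\rho$ enter — and that $R^2$ in \eqref{c22R} is read as the self-intersection on $S$, so that the cross term $R\cdot\rho^\star K_Y$ is pushed to $Y$ correctly and not confused with a number computed on $Y$. The only genuinely delicate point is the validity of GRR and Noether's formula when $S$ or $Y$ are not smooth: since the statement assumes only that $Y$ is integral, I would either invoke the singular Riemann--Roch with the appropriate Todd homology classes, or reduce to the smooth situation, which is exactly what occurs in the applications of this paper, where $Y$ is a (blown-up) ruled surface and $S$ is taken smooth, and where the formulas above hold verbatim.
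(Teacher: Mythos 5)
Your proposal is correct and follows essentially the same route as the paper: apply Grothendieck--Riemann--Roch to $\rho$ for the sheaves $\sO_S$ and $\sO_S(2R)$, extract the codimension-one and codimension-two parts, and simplify using $K_S \equiv \rho^\star K_Y + R$ together with the projection formula, with the Todd classes written via $\chi(\sO_Y)$ and $\chi(\sO_S)$. Your extra remarks (the alternative derivation of the $\chi(\sO_S)$ formula from $\chi(\sO_Y)+\chi(\E^\vee)$, and the caveat about smoothness when invoking GRR) are sound refinements of the same argument.
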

\begin{proof} 
By the Grothendieck-Riemann-Roch theorem for $ \pi\colon S \to Y$ applied to the sheaf $\sO_S$ we can write:
${\rm ch} (\pi _! \sO _S) \cdot {\rm td} {\sT}_Y = \pi_  \star ({\rm ch }\sO _S \cdot {\rm td} {\sT}_S)$. In our case this means that $
{\rm ch} (\pi _  \star \sO _S) \cdot (1-{1\over 2}K_Y +\chi(\sO _Y))=\pi _  \star (1-{1\over 2} K_S +\chi(\sO_S))$, 
that is
$$
(n-c_1 (\E)+{1\over 2}(c_1 ^2 (\E) -2c_2 (\E))\cdot (1-{1\over 2}K_Y +\chi(\sO _Y))=\pi _\star (1-{1\over 2} K_S +\chi(\sO_S)).
$$
The divisorial part of the above equation in $A(Y) \otimes_{\mathbb Z} \Qmath$ gives: $-c_1 (\E)=-{1\over 2} (\pi _\star K_S - n K_Y)$
\noindent
and as $K_S \sim \pi ^\star K_Y + R$, where $\sim$ denotes the linear equivalence, we have $\pi _\star K_S \equiv nK_Y + \pi _\star R$ and $(1)$ follows. The equality between the codimension two cycles gives formula (\ref{chi}). Formulae (\ref{c12R}), (\ref{c22R}) follow by the same argument applied to the sheaf $\sO_S(2R)$ and by (\ref{R}) and (\ref{chi}).
\end{proof}
\subsection{First formula for the slope}

We shall now express the slope of a fibration, which factorises through a Gorenstein cover $\pi$, in terms of the $\pi$-relative canonical divisor and the Chern classes of the reduced direct image sheaf.

Observe that since $S$ is Gorenstein, it admits a Cartier canonical divisor $K_{S}$. It follows that for the fibration $ f\colon S \to B$, there exists a Cartier relative canonical divisor
$$
K_{ f} := K_{S}- { f}^\star K_B.
$$ 
Furthermore, we set
$$
\chi_{f}:=\chi (f _\star \sO_{S} (K_{ f})).
$$
So the slope of $ f$ is defined as
$$
s(f):= {K_{f}^2 \over \chi_{f}},
$$
where the intersection number is taken in the sense of \cite{K}.
 
 \begin{rem} 
 If $S$ is a normal, Gorenstein surface with canonical singularities, then
\begin{equation}\label{k2chi}
K_f ^2=K_S ^2-8(g-1)(g(B)-1), \qquad \chi _f = \chi (\sO _S)-(g-1)(g(B)-1).
\end{equation}
The formulae are well known when $S$ is a smooth surface.
If $S$ is singular and $\tau: \wdt S \to S$ is a minimal resolution of the singularities, then
$$
\tau_\star \omega_{\wdt S} \cong \omega_S, \qquad K_{\wdt S}^2= K_S^2, \qquad \chi(\sO_{\wdt S}) =\chi(\sO_S),
$$
hence the smooth case formulae (\ref{k2chi}) still hold.

\end{rem}

\begin{defn}\label{vaalala}
We say that $f\colon S\to B$ factorises through the Gorenstein cover $\pi\colon S\to Y$ if there exists a fibration 
$\pi_B\colon Y\to B$ such that $f=\pi_B \circ\pi$. 
\end{defn}

\bigskip

In the rest of the paper, unless otherwise stated, $Y$ is a ruled surface and $\pi_B\colon Y\to B$ is the ruling morphism. 
It is well-known that the 
$\mathbb Q$-Neron-Severi group ${\rm{ NS}} (Y)_{\mathbb Q}:={\rm{NS}}(Y)\otimes_\mathbb Z\mathbb Q$
 is a $2$-dimensional vector space over $\mathbb Q$ and that ${\rm{ NS}} (Y)_{\mathbb Q}=[T_Y]\mathbb Q \oplus [L]\mathbb Q$
 where $[T_Y]$ is the numerical class of a section of $\pi_B\colon Y\to B$ and $[L]$ is the class of a ruling.

Let $T_0$ be the following $\Qmath$-divisor on $Y$:
$$
T_0 := T_Y - {1\over 2} T_Y ^2  L.
$$

Next lemma gives an expression for the first Chern class of the reduced direct image sheaf $\E$,
which will be particularly useful in the sequel.
\begin{lem}\label{c1} Let $f\colon S\to B$ be a genus-$g$ fibration which factorises through a degree $n$ Gorenstein cover $\pi\colon S\to Y$ such that 
$\pi_B\colon Y\to B$ is a ruled surface. Then 

$$
c_1 (\E) \equiv (g+n-1){T}_0+ \left(
{c_1 (\E)^2 \over 2(g+n-1)}\right)L.
$$
\end{lem}
\begin{proof}
Let $b\in B$ a general point. We know that ${\rm{ NS}} (Y)_{\mathbb Q}=[T_0]\mathbb Q \oplus [L]\mathbb Q$ where in this proof we set $L:=\pi_{B}^{-1}(b)$. We consider the restriction $\E_L$ of $\E$ to $L$. Then $c_1(\E)\equiv \deg c_1(\E _L){ T}_0 +\delta L$ for some $\delta \in \Qmath$. 

We first show that $\deg c_1(E_L)=g+n-1$.
Indeed, by definition $\pi^{*}L=F_{b}$. Hence by projection formula and by Lemma \ref{scriviamolo} (\ref{R}) it holds that
$\deg c_1(\E_L) = {1\over 2} R \cdot  F_b$. The fiber $F_b$ is general, hence it is transversal to $R$ and the ramification divisor of the induced morphism 
$\pi_{|F}\colon F\to L$ is given by $R_{|F_b}$. By the Riemann-Hurwitz formula it follows that $g+n-1={1\over 2} R \cdot  F_b=\deg c_1(\E_L)$.

Now, since $T_0 ^2 =0, \ L ^2 =0,\ T_0 \cdot L =1$, we get $c_1 (\E) ^2 = 2(g+n-1)\delta$ and the statement follows.
\end{proof}

Next proposition expresses the slope in terms of $R^2$ and the Chern classes of $\sE$.
 
\begin{prop}\label{slopegenerale} Let $f:S\to B$ be a fibration with $S$ normal and with canonical singularities, which factorises through a Gorenstein degree $n \le 4$ cover 
$\pi\colon S\to Y$ of a ruled surface $Y$.
Let $R$ be the $\pi$-relative canonical divisor and let $\E$ be the reduced direct image sheaf of $\pi\colon S\to Y$. Let 
$c_1 (\sE)$, $c_2 (\sE)$ be respectively the first Chern class and the second Chern class of $\E$.
Then
\begin{equation}\label{primaslope}
s(f) = { R^2 - {4\over g+n-1} c_1 (\sE)^2 \over {g+n-2 \over 2(g+n-1)} c_1 (\sE)^2 -c_2 (\sE)}.
\end{equation}
\end{prop}

\begin{proof} 
 By formula (\ref {k2chi}) we have
 $$
K_f ^2 = (R +\pi ^\star K_Y) ^2 -8(g-1)(g(B)-1)=
R^2 + 4 c_1 (\sE) \cdot K_Y +n K_Y^2 -8(g-1)(g(B)-1),
$$
where we applied Lemma \ref{scriviamolo} (\ref{R}) and the projection formula for
$\pi\colon S\to Y$.

 In the $\Qmath$-basis $T_0, L$ we have
$K_Y \equiv -2T_0 + (2g(B)-2)L$. Using Lemma \ref{c1} we obtain $c_1 (\sE)\cdot K_Y = -{c_1(\sE)^2 \over (g+n-1)} +2(g+n-1)(b-1)$, hence
$K_f ^2 = R^2 - {4\over (g+n-1)} c_1 (\sE)^2$. 

To show that $\chi _f =\frac{g+n-2}{2(g+n-1)}c_1 (\sE)^2 -c_2 (\sE)$, we use the formula 
given in (\ref{k2chi}) $\chi _f = \chi (\sO _S)-(g-1)(g(B)-1)$, and the equality given in Lemma \ref{scriviamolo} (\ref{chi}), taking into account the relation 
$\chi (\sO _Y)=1-g(B)$, which holds for any ruled surface.
\end{proof}

%
%
%
%

\subsection{Weakly positive vector bundles on a ruled surface}\label{ventitre}

The aim of the next results is to find some suitable bounds on the invariants appearing in the formula for the slope given in
(\ref{primaslope}).

The crucial fact we shall use is that both the reduced direct image sheaf $\sE$ and the bundle of conics $\sF$ turn out to be weakly positive outside the branch locus of the cover and outside a zero dimensional subscheme. Let us recall the definition of weak positivity (see \cite[Definition 2.11, Remark 2.12.2]{V2}).

\begin{defn}
A locally free sheaf $\sG$ on a projective variety $Y$ is
{\it weakly positive} over $Y$ if for every ample invertible sheaf $\sH$ on $Y$ and for every $r >0$, the sheaf
$Sym^r \sG \otimes \sH$ is ample.
\end{defn}

We now consider the celebrated Viehweg's Weak Positivity Theorem \cite[3.4]{V1}, which states that 
the direct image of the relative canonical sheaf of a projective surjective morphism
between projective varieties is weakly positive on the complement of the branch locus. As $\sE$ is a quotient of $\pi_\star \omega_\pi$ and, as we shall see in Theorem \ref{casnati_discr}, $\sF$ coincides, outside a zero dimensional subscheme, with the reduced direct image sheaf associated with the discriminant morphism of the cover $\pi$, we will deduce that $\sE$ and $\sF$ are weakly positive outside the branch locus of the cover and outside a zero dimensional subscheme.

So we now study the Chern classes of weakly positive vector bundles on an open subscheme of a ruled surface, or more generally of a blow up of a ruled surface.

Let us fix the following: 
 
\begin{nota} \label{blowup}
Let $\sigma\colon\wdt Y \to Y$ be the blow-up of a ruled surface $\pi_{B}\colon Y\to B$ in a finite number of points
$q_1,..., q_s\in Y$. 
With $\wdt L$ we shall denote a fiber of $\wdt Y\to B$,
with
$T_{\wdt Y}$ the pull back of a section of $\pi_B:Y \to B$, and by
$E_i$, $i=1,..., s$ the exceptional divisors.

\end{nota}

\begin{prop}\label{c1wp}
Let $\sG$ be a vector bundle on $\wdt Y$, which is weakly positive outside a zero dimensional subscheme $\Theta \subset \wdt Y$.
%
Then $c_1 (\sG)$ is nef.

%
\end{prop}

\begin{proof}
As $\sG$ is weakly positive outside $\Theta$, the line bundle $\det \sG=\sO_{\wdt Y} (c_1(\sG))$ satisfies the same property by
 \cite[Corollary 2.20]{V2}. By the definition of weak positivity, for any ample divisor $H$ on $\wdt Y$, $H \not\supset \Theta$, and for any integer $r>0$, the bundle
$$
\left ( Sym ^r \sO _{\wdt Y} (c_1(\sG) ) \right)
  \otimes \sO _{\wdt Y} (H)=
\sO _{\wdt Y} ( r c_1(\sG) +H)
$$ 
is ample on $\wdt Y\setminus \Theta$. Then for $m>>0$, the divisor $m(r c_1(\sG) +H)$ is very ample on $\wdt Y\setminus \Theta$, so the base locus of the linear system
$|m(r c_1(\sG) +H)|$
is at most zero-dimensional. Therefore for any effective divisor $Q$ on $\wdt Y$ we have that $m(r c_1(\sG) +H)\cdot Q\ge 0$, since otherwise $Q$
would be contained in the base locus. It follows that $r c_1(\sG)\cdot Q\ge 0$ and $c_1(\sG) \cdot Q \ge 0$, which shows the nefness of $c_1(\sG)$ on $\wdt Y$.
\end{proof}

The next result establishes some Bogomolov-type inequalities for rank two vector bundles on blows up of ruled surfaces, with the assumption that they are weakly positive outside a zero dimensional subscheme.

We need to recall the notion of {\it general splitting type}.
Let $\sG$ be a rank two vector bundle on a ruled surface $\pi_B\colon Y\to B$. A couple
$(\alpha,\beta)\in\mathbb Z\oplus\mathbb Z$, where $\alpha\leq \beta$, is said to be the 
{\it general splitting type} of $\sG$ if the restriction $\sG_L$ of $\sG$ to a general fiber $L$ of $\pi_B\colon Y\to B$ is isomorphic to $\sO_{\mP^1}(\alpha)\oplus\sO_{\mP^1}(\beta)$.

Observe that if $\wdt Y$ is the blow-up of $Y$ at a finite number of points, the general splitting type of a vector bundle $\sG$ on $\wdt Y$ with respect to the fibration $\wdt Y\to B$ can be defined similarly.

\begin{thm}\label{boundsc2} Let $\sigma\colon\wdt Y \to Y$ be the blow-up of a ruled surface $\pi_{B}\colon Y\to B$ at $q_1,..., q_s\in Y$. If $Y$ admits a negative section $T_Y$, assume that $q_i \not \in T_Y$ for any $i=1,\dots, s$.

 Let $\sG$ be a rank two vector bundle on $\wdt Y$,
with $(\alpha, \beta)$, $0< \alpha \le \beta$ the general splitting type of $\sG$. Let 
$c_1 (\sG)$, $c_2 (\sG)$ be respectively the first Chern class and the second Chern class of $\sG$, and set
$c_1 (\sG) \equiv (\alpha+\beta) T_{\wdt Y} + \delta \wdt L +\sum_{i=1}^s m_i E_i$,
where we use the notations of \ref{blowup}.

Then
 \begin{enumerate}
 \item $c_2 (\sG) \ge {1\over 4} (c_1 (\sG)^2 + {\sum_{i=1}^s m_i^2})$ if $\alpha = \beta$;
 \item  \label{<} $c_2 (\sG) \ge  {\alpha\over 2 (\alpha +\beta)}  (c_1 (\sG) ^2 + {\sum_{i=1}^{s} m_i^2})$ if $\alpha < \beta$ and $\sG$ is 
 weakly positive outside a zero-dimensional subscheme $J\subset\wdt Y$.
 \end{enumerate}
 \end{thm}
\begin{proof} We recall that ${\rm{NS}}(\wdt Y)=
[T_{\wdt Y}]\mathbb Z\oplus [\wdt L]\mathbb Z\oplus_{i=1}^{s} [E_{i}] \mathbb Z$. We shall adapt the construction of Brosius \cite{B} to our more general case. 
We first show $(2)$, that is we assume $\alpha < \beta$ and that $\sG$ is weakly positive on $\wdt Y\setminus J$.
 We set $\wdt\pi_{B}:=\pi_{B} \circ\sigma \colon\wdt Y\to B$.
 Since $\alpha<\beta$, $({\wdt \pi _{B}})_\star \sG (-\beta T_{\wdt Y})$ has rank one and it is a locally free sheaf on the curve $B$, since it is the direct image of a torsion free sheaf. Then by \cite [prop. III.9.8]{Ha} we can write:
$$
({\wdt \pi _{B}})_\star \sG (-\beta T_{\wdt Y})= \sO _B (N)
$$
where $N$ is a suitable divisor on $B$. By construction, 
 the natural map ${\wdt{\pi}}_{B}^\star \sO _B (N)\to  \sG (-\beta T_{\wdt Y})$
 is generically injective,
hence it is an injective map of locally free sheaves. It follows that the quotient sheaf is locally free outside a scheme 
$Z$ of codimension $2$. 
Denote by $D$ the first Chern class of such a quotient sheaf. Then the canonical extension of Brosius (\cite[Lemma 3, Proposition 2]{B}) of the vector bundle $\sG (-\beta T_{\wdt Y})$ has the form 
$$
0\to \sO _{\wdt Y}({\wdt{\pi}}_{B}^\star N) \to  \sG (-\beta T_{\wdt Y}) \to \sO _{\wdt Y} ((\alpha -\beta) T_{\wdt Y} + {\wdt{\pi}}_B^\star M+\sum_{i=1}^s m_i E_i) \otimes \I _Z \to 0,
$$
where ${\wdt{\pi}}_{B}^\star N + {\wdt{\pi}}_{B}^\star M \equiv \delta \wdt L$.

Twisting by $\beta T_{\wdt Y}$ we get
\begin{equation}\label{brosius_shift}
0\to \sO _{\wdt Y} (\beta T_{\wdt Y} + {\wdt{\pi}}_B^\star N) \to \sG \to \sO _{\wdt Y} (\alpha T_{\wdt Y} + {\wdt{\pi}}_B^\star M+\sum_{i=1}^s m_i E_i) \otimes \I _Z\to 0.
\end{equation}
Note that ${\wdt{\pi}}_B^\star N \cdot E_i =0$ for $i=1,..., s$. Hence we have:
$$
c_2 (\sG) = (\beta T_{\wdt Y} +{\wdt{ \pi}}_B^\star N) \cdot (\alpha T_{\wdt Y} + {\wdt{\pi}}_B^\star M +  \sum_{i=1}^s m_i E_i) )+{\rm{deg } Z} =
\alpha \beta T_{\wdt Y} ^2 + \alpha \deg (  \pi_B^\star N) +\beta \deg (  \pi_B^\star M)+{\rm{deg } Z}.
$$
By using the expression
$$
c_1 (\sG) \equiv ( \alpha + \beta) T_{\wdt Y} + {(c_1(\sG)^2 - (\alpha +\beta)^2T_{\wdt Y}^2 +\sum_{i=1}^{s}m_i^2)\over 2 (\alpha + \beta)}\wdt L  +\sum_{i=1}^s m_i E_i,
$$
we get $\deg (  {\wdt{\pi}}_B^\star N) = {(c_1(\sG)^2 - (\alpha +\beta)^2T_{\wdt Y}^2 +\sum_{i=1}^{s}m_i^2)\over 2 (\alpha + \beta)}
 - \deg (  {\wdt{\pi}}_B^\star M)$ and therefore:
 
 \begin{equation}\label{formulac2}
 c_2 (\sG) ={\alpha \over 2(\alpha +\beta)} c_1 ^2 +{\alpha \over 2} (\beta - \alpha) T_{\wdt Y} ^2 + (\beta - \alpha)\deg (  \pi_B^\star M)+{\rm{deg } Z} +\alpha\frac{\sum_{i=1}^s m_i^2}{2(\alpha+\beta)}.
 \end{equation}
Finally, observe that as $\sG$ is weakly positive on $\wdt Y\setminus J$, and
 $\sO _{\wdt Y} (\alpha T_{\wdt Y} + {\wdt{\pi}}_B^\star M + \sum_{i=1}^s m_i E_i)\otimes \I _Z$ is a 
 quotient line bundle of $\sG$ on $\wdt Y\setminus Z$, 
 we deduce that 
$\sO _{\wdt {Y}} (\alpha T_{\wdt Y} + {\wdt{\pi}}_B^\star M +\sum_{i=1}^s m_i E_i)$ 
is weakly positive on $\wdt Y\setminus (J\cup Z)$. 

By Proposition \ref{c1wp} we have that
$\alpha T_{\wdt Y} + {\wdt{\pi}}_B^\star M +\sum_{i=1}^s m_i E_i$ is nef on $\wdt Y$.


In particular, if we choose an irreducible $T_{\wdt Y}$ so that $T_{\wdt Y} \cdot E_i=0$,
we have $(\alpha T_{\wdt Y} + \pi_B^\star M) \cdot T_{\wdt Y} \ge 0$, that is
\begin{equation}\label{m}
\deg ( \pi_B^\star M)  \ge -\alpha T_{\wdt Y} ^2.
\end{equation}
If there exists a section $T_{\wdt Y}$ such that $T_{\wdt Y} ^2 = 0$ or $T_{\wdt Y} ^2 < 0$, by the hypothesis $q_i\not \in T_Y$ we can choose such a section to get the bound (\ref{m}). We get the statement (2) in this case from formula (\ref{formulac2}), as $Z$ is an effective $0$-dimensional cycle.

Assume now that for any section $T_Y$ of $Y$, its $\sigma$-pull-back 
$T_{\wdt Y}$ satisfies $T_{\wdt Y}^2 >0$, which is equivalent to saying that 
 $Y=\mP (\sV)$ with $\sV$ a stable rank two vector bundle on the curve $B$. Take $T_{ Y}$ to be the tautological divisor of $\mP (\sV)$. By Miyaoka Theorem \cite{Mi} the normalised tautological divisor $T_{ Y} - {1\over 2} \pi_B^\star c_1 (\sV)\equiv T_{ Y} -{1\over 2} T_{ Y}^2 L$ is a nef $\mQ$-divisor. Then for any rational number $\epsilon >0$ we have that
$$
T_{Y} - {1\over 2} T_{Y}^2 L +\epsilon T_{Y}
$$
is $\mQ$-ample. In particular we have that for $l>>0$
\begin{equation}\label{modestia}
l(T_{\wdt Y} - {1\over 2} T_{\wdt Y}^2 \wdt L +\epsilon T_{\wdt Y})\cdot (\alpha T_{\wdt Y} + \pi_B^\star M)\ge 0,
\end{equation}
since we can avoid the points $q_1,.., q_s$.

By equation (\ref{modestia}) we have that $(\alpha/2 +\epsilon)T_{\wdt Y}^2 +(1+\epsilon)\deg ( \pi_B^\star M) \ge 0$. Since this holds for any $\epsilon >0$, we have
$\deg ( \pi_B^\star M) \ge -\alpha/2  \  T_{\wdt Y}^2$. Using such inequality in equation (\ref{formulac2}) we conclude.

Now we show $(1)$. We have $\alpha = \beta$ and the claim can be proved in the same way as above, by taking 
into account that ${\wdt \pi}_{B}^\star (\wdt\pi_{B})_{\star} \sG (-\beta T_{\wdt Y})=:\sH$
has rank two and its Bogomolov discriminant $\Delta(\sH)= 4c_2 (\sH)-c_1(\sH)^2$
is zero, since $\sH$ is the pull back of a vector bundle on a curve.
On the other hand,
the exact sequence
$$
0\to \sH \to \sG (-\beta T_{\wdt Y}) \to \sI_Z\otimes_{\sO_{\wdt Y}}\sO_{\wdt {Y}} \to 0
$$
gives $Z=c_2 (\sG(-\beta T_{\wdt Y})) = c_2 (\sG) + c_1 (\sG) \cdot (-\beta T_{\wdt Y}) + \beta ^2 T_{\wdt Y}^2$;
as $Z$ is effective, we have $c_2 (\sG(-\beta T_{\wdt Y}))\ge 0$ and by computing explicitly the last expression, the claimed bound follows.
\end{proof}

%

\section{Slope and Chern classes of the the Casnati-Ekedahl bundle of conics}
In this section we will show the claims of the Main Theorem for a semistable fibration $f\colon S\to B$ which 
factorises through a degree $4$ Gorenstein cover $\pi\colon S\to Y$ 
and such that its general fiber $F$ is a smooth fourgonal curve. 

\subsection{The Casnati-Ekedahl bundle of conics}

If $\pi\colon S\to Y$ is a degree $4$ Gorenstein cover by Theorem \ref{Casnati} there exists a unique $\mP ^{2}$-bundle
$\pi _Y\colon \mP \to Y$ and an embedding $j\colon S \to \mP$ such that $\pi=\pi _Y \circ j$,  
$\mP\cong \mP (\sE)$ where $\sO _S (R) \cong j^\star \sO _{\mP (\sE)} (1)$. 
Moreover, by \cite[Proof of Step B); p. 445]{CE}, the push forward of the exact sequence 
$$
0\to \I _{S, \mP (\sE)}(2) \to \sO _{\mP (\sE)} (2) \to \sO _S (2) \to 0
$$
gives the following exact sequence of locally free sheaves on the surface $Y$:
\begin{equation}\label{concon}
0 \to \pi_\star \I _{S, \mP (\sE)}(2) \to Sym ^2 (\sE) \to \pi_\star \sO _S (2R) \to 0.
\end{equation}

The sheaf $\F:=\pi_\star \I _{S, \mP (\sE)}(2)$ is called {\it{Casnati-Ekedahl bundle of conics}}. 
The next proposition shows how to write $R^2$ in terms of the classes 
$c_1(\sE)^2$, $c_2 (\sE)$, $c_2 (\sF)$. 

\begin{prop}\label{invarianti} Let $f\colon S\to B$ be a genus $g$ fibration which 
factorises through a degree $4$ Gorenstein cover $\pi\colon S\to Y$ where
 $Y$ is a smooth surface. Then
 \begin{equation}
 c_1(\sF)=c_1(\sE),
 \end{equation}
 and
\begin{equation}\label{formulaR2}
R^2= 2c_1 (\E)^2 -4c_2 (\E) +c_2 (\F).
\end{equation}
\end{prop}
\begin{proof}
By sequence (\ref{concon}) and by a standard computation we have:
\begin{equation}\label{csym}
 c_1 (Sym ^2 (\sE))=4 c_1 (\sE), \quad c_2 (Sym ^2 (\sE))=5 c_2 (\sE)+5
 c_1 (\sE)^2.
 \end{equation}
\noindent
By Lemma \ref{scriviamolo} (\ref{c12R}) we have $c_1 (\pi_\star \sO _S (2R))\equiv 3 c_1 (\E)$ and by Lemma \ref{scriviamolo} (\ref{c22R})
$$
c_2 (\pi_\star \sO _S (2R)) =c_2 (\E) + 4c_1 (\E) ^2-R^2.
$$
 Hence from the sequence (\ref{concon}) we get $c_1 (\F) =  c_1 (\E)$ and
$ c_2 (\F)= R^2 +4 c_2 (\E) -2 c_1 (\E) ^2$, which gives our claim.
\end{proof}

\begin{cor}\label{invariantiinvarianti}
Let $f\colon S\to B$ be a genus $g$ fibration which 
factorises through a degree $4$ Gorenstein cover $\pi\colon S\to Y$ where 
$\pi_B\colon Y\to B$ is a ruled surface. Then
\begin{equation}\label{slope4}
s(f) ={ {2(g+1)\over (g+3)} c_1(\sE)^2 - 4c_2 (\sE)+c_2 (\sF)  \over {(g+2)\over 2(g+3)} c_1 (\sE)^2 - c_2 (\sE)}.
\end{equation}
\end{cor}
\begin{proof}
It follows from Proposition \ref{invarianti} and from Proposition \ref{slopegenerale}.
\end{proof}

%
 
The next step consists in bounding the second Chern class of the bundle of conics $\sF$. We shall actually apply Theorem \ref{boundsc2}, with an additional argument. Indeed, the results \cite[Proposition 4.4, remarks on page 1364]{C} allow to prove that
$\sF$ is weakly positive outside the branch locus $B(\pi)$ and a zero-dimensional subscheme. 

The formulation of the statement requires some preliminaries.

\begin{defn}\label{plan}
Let $\pi\colon S \to Y$ be a Gorenstein degree four cover, with $Y$
a smooth surface. We say that $y\in Y$ is {\it$\pi$-planar} 
(see \cite[Definition 3.2]{C})) if the fiber scheme $\pi^{-1} (y)$ is isomorphic to the scheme
$Spec \left( {k(y) [u,v] \over (u^2, v^2)} \right)$.
\end{defn}

The local analysis given in \cite[remarks on page 1364]{C} yield the following:

\begin{lem}\label{planar}
If $S$ is smooth, a point $y\in Y$ is $\pi$-planar iff $y$ is at least a fourfold point of the branch divisor $B(\pi)$ of $\pi\colon S \to Y$.
\end{lem}

Let $\pi\colon S \to Y$, where $Y$ is a smooth surface,
be a Gorenstein degree four cover with reduced direct image sheaf $\sE$ and Casnati-Ekedahl bundle of conics
$\sF$. Then the {\it discriminant scheme} $\Delta (S)$ and {\it discriminant map} $\Delta (\pi):\Delta (S) \to Y$ are defined \cite[Definition 4.1]{C}. The discriminant scheme corresponds to the locus of degenerate conics
in the pencils of conics associated with the fibers of $\pi$.


\begin{thm}\label{casnati_discr} 
The discriminant map $\Delta (\pi) : \Delta (S) \to Y$ associated with a degree four Gorenstein cover $\pi:S \to Y$
 is a generically finite morphism of degree three. Moreover,
$\Delta (\pi)$ is not finite over a point $y\in Y$ iff $y\in Y$ is $\pi$-planar, and in this case $\Delta (\pi) ^{-1} (y) \cong \mP ^1 _{k(y)}$. 

Finally, if there are no $\pi$-planar points, then 
$\Delta (\pi) : \Delta (S) \to Y$ is a Gorenstein cover of degree three, the corresponding reduced direct image sheaf is $\sF={{(\Delta (\pi))_\star}  \omega _{\Delta (S)/ Y}}/{ \sO _Y}$ and the branch divisors of $\pi$ and $\Delta(\pi)$ coincide.
\end{thm}
\begin{proof} See \cite[Proposition 4.4]{C}, where a stronger version of the statement is shown.\end{proof}

As a consequence we have the following:

\begin{prop}\label{debolmentepospos}
Let $S$ be a normal surface and $Y$ a smooth surface. If $\pi\colon S \to Y$ is a Gorenstein cover of degree four with reduced branch divisor $B(\pi)$,
then the Casnati-Ekedahl bundle of conics $\sF$ is weakly positive outside $B(\pi)$ and a zero-dimensional scheme $J$.
\end{prop}
\begin{proof} By Theorem \ref{casnati_discr}, $y\in Y$ is a $\pi$-planar point if $\pi^{-1}(y)$ is supported on a singular point of $S$ or it is at least a fourfold point of $B(\pi)$. 
Since $S$ is normal, $Y$ is smooth and $B(\pi)$ is reduced, there is at most a finite number of $\pi$-planar points by Lemma \ref{planar}, hence the discriminant morphism $\Delta (\pi) : \Delta (S) \to Y$ is Gorenstein and finite outside a zero-dimensional subscheme $J$, supported on the $\pi$-planar points, by Theorem \ref{casnati_discr}.

Moreover, we observe that, by a similar argument to the one in \cite[Proposition 4.5 i), Corollary 4.11] {C}, one can deduce that $\Delta (S)$ is integral since $S$ is normal. 

 Therefore $\sF$ concides with the $\Delta(\pi)$ reduced direct image outside $J$, and by the Viehweg Weak Positivity Theorem \cite[3.4]{V1} $\sF$ is weakly positive outside $B(\pi)\cup J$.
 \end{proof}

Next we need to determine the general splitting type of $\sF$. This will be done using Schreyer's results on fourgonal curves.

\subsection{Chern classes of the Casnati-Ekedahl bundle of conics}



Let $F$ be a fourgonal curve of genus $g \ge 10$. By the Geometric Riemann-Roch Theorem, the 
span of any gonal divisor on the canonical model of $F$ is two dimensional, and the union of such spans determine the three-dimensional gonal scroll $W \subset \mP^{g-1}$, containing the canonical model of $F$ (c.f. \cite[Theorem 5]{DZ}). Set $H$ to be the hyperplane divisor on $W$ and let $\Pi$ be a fiber of the natural projection $W\to \mP^1$. With these notations we have:

\begin{thm}\label{Schreyer} There exist $b_1, b_2\in\mathbb N$ such that 
the canonical model $C$ of a fourgonal curve $F$ of genus $g \ge 5$ is the complete
intersection $C=Q_1 \cap Q_2$, where $Q_{1}\in |2H -b_1 \Pi|$ and $Q_2 \in  |2H -b_2  \Pi|$.
The two integers $b_1$ and $b_2$ satisfy the following relations:
$$
0\le b_2 \le b_1 \le g-5, \  \ \  b_1 + b_2 = g-5.
$$
Moreover $\pi_{F}\colon F\to \mP ^1$ factorises through a
double cover of a curve of genus $\gamma <{(g-3)\over 6} $ if and only if $ b_1 > {2\over 3} (g-3)$ and $\gamma = {b_2 \over 2}+1$.
\end{thm}
\begin{proof} See \cite[Sections 6.2, 6.3, 6.4, 6.5, 6.6]{Sch}.
\end{proof}

\begin{cor}\label{gensplit}
The generic splitting types $(\alpha,\beta)$ of the Casnati-Ekedahl bundle for a fourgonal family satisfy the following:
$$
4 \le \alpha \le \beta \le g-1, \qquad \alpha + \beta = g+3.
$$
Moreover, $\pi_{F}\colon F\to \mP ^1$ factorises through a
double cover of a curve of genus $\gamma <{(g-3)\over 6} $ if and only if $ \beta> {2g+3\over 3} $ and $\gamma = {\alpha \over 2}-1$.

\end{cor}

\begin{proof}
%

Let $L$ be a general fiber of $Y$,
set $V= \mP (\E \otimes \sO _{L})$ and let $T_V$ be the tautological divisor of $V$.
By projection formula $\pi_\star \I _{S,\mP} (2 T_{\mP}) \otimes \sO _{L} \cong\pi_\star ( \I _{S,\mP} (2 T_{\mP}) \otimes \pi^\star\sO _{L})$, so
$
\F \otimes \sO _{L} =
\pi_\star (\I_{F, V} (2 {T_{\mP}}_{|V})).
$

We can determine the minimal free resolution of 
$\I_{F, V}$ using Theorem \ref{Schreyer}.

Indeed, let $W=\mP (\sW)\subset\mP^{g-1}$ be the scroll 
containing the canonical model $C$ of $F$, and let $T_W$ be the tautological divisor
on $W$.

By Theorem \ref{Schreyer} 
a minimal free resolution of $\I _{C, W} (2T_W)$ has the following form:
$$
0\to \sO _{W} (-2T_W +(g-5)\Pi) \to  \sO _{W} (b_1 \Pi)
\oplus \sO _{W}(b_2 \Pi) \to \I _{C, W} (2 T_W) \to 0.
$$
\noindent
As the pull back of $T_V$ to $F$ is the ramification divisor, we have
$$
\E \otimes \sO _{L}  \cong \sW (- K_{\mP ^1}).
$$
Now consider the isomorphism between the two scrolls: $\phi: V \to W\subset\mP^{g-1}$. 
By construction it follows that $H \sim \phi^\star (T)- 2\Pi$. Then a minimal free resolution of $\I_{F, V} (2 T_V)$ is given by:
$$
0\to \sO _{V} (-2T_V+(g+3)\Pi) \to  \sO _{V} ((b_1+4) \Pi) \oplus \sO _{V}((b_2+4) \Pi) \to \I_{F, V} (2 T_V)\to 0.
$$
Since ${\pi_{|V}}_\star \sO _{V} (-2T_V +(g+3)\Pi)= R^1 {\pi_{|V}}_\star \sO _{V} (-2T_V +(g+3)\Pi)=0$, we have
$$
{\pi_{|V}}_{\star}  \I_{F, V} (2 T)\cong \sO _{\mP ^1} (b_1 +4) \oplus \sO _{\mP ^1} (b_2 +4) ,
$$
so we get the claim by setting 
$b_2 =\alpha-4$, $ b_1 =\beta - 4$ in Theorem \ref{Schreyer}.
\end{proof}



We shall see that if we consider curves $B\subset \mfour$ not contained in some specific closed subschemes, then the bundle of conics is in fact balanced, so we will get some better bounds on the general splitting type. To this purpose let us introduce the following locus:

%

\begin{defn} The
{\it Casnati-Ekedahl locus} ${\rm CE} (\mfour)$ is the closure of the locus in $\mfour$ corresponding to curves with non balanced bundle of conics:

$$
{\rm CE} (\mfour):={\overline{ \{ [F]\in \mfour |\,\,  \sF_F \,\,{\rm{is\, not\, balanced}}\}}}
$$

\end{defn}
Such a locus turns out to be a proper closed subscheme by the following result.

\begin{thm} \label{brusac} 
If $g \ge 10$, the codimension in $\mfour$ of the 
Casnati-Ekedahl locus is given by:

$$
\codim {\rm CE} (\mfour)=
\left\{
\begin{array}{ll}
1 & {\rm if}\quad g \ {\rm odd},\\
2 & {\rm if}\quad g \ {\rm even}\\
\end{array}
\right .
$$
\end{thm}
\begin{proof}
The dimensions of such subschemes can be computed using the results in \cite{BSa}.
Since the authors use different notation and define some loci, which are in some cases slightly different 
from the one we are considering, we briefly sketch the computation.

The authors introduce the invariant $\lambda$, which is in general the minimum degree of a linear series distinct from the $g^1_4$ \cite[Theorem 6.10]{BSa}. 
 We observe that 
the locus ${\rm CE} (\mfour)$ is equal, in the authors' notation, to the stratum $\overline {\mathcal M}^{\lceil {g\over 2}\rceil} _g$, given by the closure of the curves satisfying $\lambda \le \lceil {g\over 2}\rceil$.

Indeed, 
the relation between $\lambda$ and $\beta$, where $(g+3-\beta, \beta)$, $\beta \ge (g+3)/2$,
 is the splitting type
of the bundle of conics of a given fourgonal curve, can be obtained by the formula 
$$
{\rm deg}\ Z = g+\lambda -5
$$
given in 
\cite[Theorem 4.4]{BSa} for $t=0$, which expresses the minimum degree of a surface $Z$ ruled by conics
containing the canonical model of the curve. By Schreyer's' Theorem \ref{Schreyer}, the class of such a surface in the gonal scroll
is given by $2H - b_1\Pi$, where $H$ is the tautological divisor of the gonal scroll $\sE (-2)$. Since $b_1 = \beta - 4$ (see the proof of Corollary
\ref{gensplit}), 
 we also have:
$$
{\rm deg}\ Z = 2(g-3)-\beta+4.
$$
So we finally get $\beta = g+3-\lambda$. 

The dimensions of the strata are given by the formula in the Main Theorem of \cite{BSa}, which states that
$$
\dim \overline {\mathcal M}^{\lambda} _g = g+2\lambda + 1,
$$
if $\lambda \le \lceil {g\over 2}\rceil$,
 which corresponds to the conditions $\beta \ge (g+5)/2$ if $g$ is odd, or $\beta \ge (g+4)/2$ if $g$ is even. We get
$$
\dim \overline {\mathcal M}^{\lceil {g\over 2}\rceil} _g = \left\{
\begin{array}{ll}
2g+2 & {\rm if}\quad g {\rm odd},\\
2g+1 & {\rm if}\quad g \ {\rm even},\\
\end{array}
\right .
$$
while $\dim \mfour = 2g+3$.


Finally, we observe that the condition $t\ge 1$ on
the invariant $t$ introduced in \cite[Definition of page 13]{BSa} defines a proper subscheme of $\mfour$
by \cite[Theorem 11.1]{BSa}.
\end{proof}

%

Next we consider the divisors in $\mfour$ given by the closures of the loci corresponding to
smooth fourgonal curves with one triple ramification point or two simple ramification
points on the same fiber of the cover of $\mP^1$.

\begin{defn} \label{luoghidaevitare}
We set
$$
T:= \overline  { \{[C]\in \mfour | 3p +q\in g^1_4\} },
$$
the divisor of triple ramification, 
$$
D:=\overline  { \{[C]\in \mfour | 2p + 2q\in g^1_4\} },
$$
the divisor of double simple ramifications.

Moreover, we introduce the proper subscheme 
$$
\Upsilon
\subset \mfour
$$ 
corresponding to 
points of the boundary divisor $\delta\subset \mfour$ given by stable curves with a ramification in a singular point, or by stable curves with two or more nodes. 


\end{defn}

In the next theorem we finally present our bounds of $c_2 (\F)$ in terms of $c_1 (\E)^2$. 
We denote by $F_{b}$ and by $L_{b}$ the fiber of $f\colon S \to B$ and respectively 
of $\pi_B \colon Y \to B$ over the point $b\in B$. We also denote by 
 $\pi_b \colon F_b \to L_b \cong \mP ^1$ the restriction to $F_b$ of the degree four cover $\pi\colon S\to Y$. 
%

We recall that the condition $(\dagger)$ has been defined in section \ref{ourresult}.

\begin{thm} \label{boundsc2f} Let $g\geq 10$. Let $B\subset\mfour$, and assume that
$B\not\subseteq T$, $B\not\subseteq D$.
 Let $f\colon S\to B$ be a semistable fibration 
 which
factorises through a degree $4$ Gorenstein cover.
%
%

Then it holds:
\begin{enumerate}
\item if condition $(\dagger)$ is satisfied, then $c_2 (\F) \ge {2\over (g+3)} c_1 (\E)^2$;
\item
if condition $(\dagger)$ is satisfied and if $\pi_b\colon F_b \to L_b $ factorises 
 through a double cover of a hyperelliptic curve of genus $\gamma <{(g-3)\over 6} $, then $c_2 (\F) \ge {\gamma+1 \over (g+3)} c_1 (\E)^2$;
 
\item if condition $(\dagger)$ is satisfied and $\pi_b\colon F_b \to L_b $ does not factorise, then $c_2 (\F) \ge {1\over 6} c_1 (\E) ^2$;
\end{enumerate}

Moreover,
if $g$ is even and $B\not\subset {\rm CE} (\mfour)$ and if condition $(\dagger)$ is satisfied, then
\begin{equation}\label{even} 
\displaystyle c_2 (\F) \ge  {(g+2)\over 4(g+3)} c_1 (\E) ^{2}.
\end{equation} 
If $g$
is odd and $B\not\subset {\rm CE} (\mfour)$, then
\begin{equation}
\label{odd}
c_2 (\F) \ge {1\over 4} c_1 (\E) ^2.
\end{equation}
\end{thm}
\begin{proof} By the assumptions $B\not\subseteq T$, $B\not\subseteq D$, 
the branch locus of $\pi$ is reduced, so $\sF$ is weakly positive outside a zero dimensional scheme 
and outside the branch locus by
Proposition \ref{debolmentepospos}.

If $Y$ admits no negative section, then
the statement of Theorem \ref{boundsc2} $(2)$ holds. Indeed, for any rational positive number $\epsilon$, the linear system $T_Y +(\epsilon - T_Y^2) L$ is $\mQ$-ample,
so a general $\mQ$-divisor in such a system does not contain any component of the branch divisor, hence the proof of Theorem \ref{boundsc2} $(2)$ can be applied. The statements $(1), (2)$ and $(3)$ 
follow by applying the results on the general splitting type of $\sF$ given in Corollary \ref{gensplit}.

 If $Y$ admits a negative section $T_0$, and if $\sF$ is nef over $T_0$,
 then any quotient of $\sF$ is nef over $T_0$, and the proof of Theorem \ref{boundsc2} together with the results of Corollary \ref{gensplit} yield the desired inequalities.

Finally, in the case when $Y$ admits a negative section $T_0$, and if $\sF$ is not nef over $T_0$, we claim that our additional assumption $(\dagger)$ guarantees that 
$\sF(-T_0)$ is nef over $T_0$.

First notice that if $\sF$ is not nef over $T_0$, then $T_0$ is contained in the branch locus $B(\pi)$ of
$\pi$. By the assumption $B\not\subseteq T$, $B\not\subseteq D$, we have that the branch locus is reduced, 
hence we have a first inequality:
\begin{equation}\label{primabasicinequality}
0< (B(\pi) - T_0)\cdot T_0 = (2c_1(\sF) - T_0)\cdot T_0.
\end{equation}
Moreover, as $B(\pi)$ and the branch divisor of the discriminant morphism $\Delta(\pi)$ coincide outside a zero dimensional subscheme, $T_0$ is also in the branch of
$\Delta (\pi)$, so denoting by $p_\sF : \mP(\sF) \to Y$ the natural projection, the divisor $\Delta (S) \subset \mP (\sF)$ satisfies:
\begin{equation}\label{secondabasicinequality}
\Delta (S) \cdot p_\sF^\star  T_0 \sim 2C + C',
\end{equation}
where $C$ and $C'$ are distinct irreducible
sections of $p_\sF^\star  T_0 = \mP (\sF\otimes \sO_{T_0})$.
We also point out that $\Delta (S) \cdot p_\sF^\star  T_0$ contains no fibers of the ruled surface $p_\sF^\star  T_0$
as such fibers would correspond to $\pi$-planar points. Since the $\pi$-fibers over such points are isomorphic to
$Spec \left( {k(y) [u,v] \over (u^2, v^2)} \right)$, we see that any curve containing such a subscheme as
a fiber has at least a node at such a point, with a double ramification. It follows that its stable model belongs either to the boundary of the divisor $D$
of double simple ramifications, or it has two or more singularities, hence this would contradict the condition $(\dagger)$.

Now we observe that 
\begin{equation}\label{terzabasicinequality}
C \cap C' =\emptyset.
\end{equation}
Indeed a point in $C \cap C'$ would imply a total
ramification point for the discriminant cover $\Delta(\pi)$, hence the cover $\pi$ would have either a total ramification point, or a triple ramification point, or a planar point. Indeed, the only pencils of
irreducible conics admitting only one reducible conic are the
hyperosculating and the osculating ones. Furthermore, the only pencil of reducible conics yielding a zero dimensional Gorenstein base locus is the one corresponding to a planar point. The condition $(\dagger)$ implies that 
all this cases can not occur. 

Now we show that by the relations (\ref{primabasicinequality}), (\ref{secondabasicinequality}), (\ref{terzabasicinequality}) it follows that
$\sF(-T_0)$ is nef over $T_0$. 
Since $\sF \otimes \sO_{T_0}$ is not nef, denoting by $H$ the tautological divisor of $\mP(\sF \otimes \sO_{T_0})$,
and by $H_0$ a section of minimal selfintersection, we have
$$
H \cdot H_0 <0,
$$
so $H$ contains $H_0$.
But as the restriction of $H$ to the discriminant $\Delta$ 
gives the ramification divisor of the discriminant morphism, 
and since the only section contained in the ramification
divisor over $T_0$ is $C$, we have
$C=H_0$.

Our claim is equivalent to show that $H +(-T_{0}^{2})L$ is nef where $L$ is a ruling of $\mP(\sF \otimes \sO_{T_0})$. In other words, we want to show that 
$(H -p_\sF^\star  T_0)\cdot A\geq 0$ for any irreducible reduced curve $A\subset \mP(\sF \otimes \sO_{T_0})$. 

Since the Neron-Severi group of $\mP(\sF \otimes \sO_{T_0})$ is $\mathbb Z[H_0]\oplus\mathbb Z[L]$, we have only to prove that
$(H-p_\sF^\star  T_0)\cdot H_0 \ge 0$.


Note that by construction, the discriminant surface $\Delta(S)
\subset \mP(\sF)$ is a reduced effective divisor, with divisor class 
$$
\Delta (S)\sim 3T_{\mP(\sF)}-\pi^\star c_1(\sF),
$$
where $T_{\mP(\sF)}$ is the tautological divisor of $\mP(\sF)$,
 and 
 $\mP(\sF \otimes \sO_{T_0})$ is not a subdivisor of $\Delta (S)$. In particular 
$$
 \Delta (S)_{| \mP(\sF \otimes \sO_{T_0})}=2H_0+C' \equiv 3H+ (-c_1(\sF)\cdot T_{0})L,
 $$ 
so 
$$
3H \equiv 2H_0 + C' + (c_1(\sF)\cdot T_{0})L.
$$

Assume that $H_0^2\geq 0$. Then by the relation (\ref{primabasicinequality}), we have
$$
3(H -p_\sF^\star  T_0)\cdot H_0= 2 H_0^2 +C' \cdot H_0 +c_1(\sF)\cdot T_0- 3  T_0^2
\geq 2H_0^2 -\frac{5}{2}T_{0}^{2}>0.
$$
\noindent
Finally we consider the case where $H_0^2<0$ and $H\cdot H_0< 0$. We have
$$
[H]=[H_0]+b[L]
$$
where $b\in\mathbb Z$. Observe that by the relation (\ref{secondabasicinequality}) it follows that:
$$C'\sim 3H+ (-c_1(\sF)\cdot T_{0})L-2C=C+(3b+(-c_1(\sF)\cdot T_{0}))L.$$

By our condition (\ref{terzabasicinequality}) we have that
$H_0^2=c_1(\sF)\cdot T_{0}$.
By the standard equality $ c_1(\sF)\cdot T_{0}=H^2=H_0^2+2b$ it follows that $b=0$.
This means that
$H=H_0$, so 
$$
2(H +(-T_{0}^{2})L)\cdot H_0= 2(H_0^2-T_{0}^{2})=(2c_1(\sF)-T_0)\cdot T_0 -T_{0}^{2}\geq -T_{0}^{2}>0.
$$

Now we use the fact $\sF(-T_0)$ is nef over $T_0$ to get our bounds.

 More precisely, by choosing $T_{\wdt Y} = T_0$ in the sequence (\ref{brosius_shift}),
by shifting it by $\sO_Y (-T_0)$ and by restricting it to $T_0$, since
any quotient of $\sF(-T_0) \otimes \sO_{T_0}$ is nef, instead of the inequality \ref{m} we get
the inequality
$$
\deg ( \pi_B^\star M)  \ge -(\alpha -1)T_{0} ^2,
$$
which allows to conclude in the same way as we have $\alpha \ge 4$ by Corollary \ref{gensplit}.

Finally, assume that $B\not\subset{\rm CE} (\mfour)$. Then $\sF$ is balanced. Again by Theorem \ref{boundsc2} the claimed bounds (\ref{even}) and (\ref{odd}) follow. Observe that for the proof of 
the odd balanced case we need not the assumption $(\dagger)$.
\end{proof}

%

The next result will allow to establish a bound on $c_2(\sE)$.
\begin{lem}\label{index}
 Let $\pi\colon S \to Y$ be a Gorenstein cover of arbitrary degree $n$, where $S$ is a normal surface and let $R$ be the $\pi$-relative canonical divisor.
 Let $\sE$ be the reduced direct image sheaf.
Then
$$
R^2 \le {4\over n} c_1 (\sE)^2.
$$
\end{lem}
\begin{proof} 
Let $H$ be any ample divisor on $Y$. Since $\pi$ is finite, the divisor $\pi^\star H$ is ample on $S$. Observe that the $\mQ$-divisor 
$R-{2\over n} \ \pi^\star c_1 (\sE)$ satisfies
$$
\left( R-{2\over n} \ \pi^\star c_1 (\sE)\right) \cdot \pi^\star H = (\pi_\star R -2 c_1(\sE))\cdot H =0.
$$
By the Hodge Index Theorem we have
$$
\left( R-{2\over n} \ \pi^\star c_1 (\sE)\right)^2 \le 0,
$$
which gives the claim.

%
\end{proof}

%
%
%
%

\begin{lem}\label{indexapplied} Let $f\colon S\to B$ be a genus $g$ fibration which 
factorises through a degree $4$ Gorenstein cover $\pi\colon S\to Y$ where $Y$ is a smooth surface. Then
\begin{equation}
c_2(\E) \ge {1\over 4} \Big(c_1 (\E) ^2 + c_2 (\F)\Big).
\end{equation}
\end{lem}

\begin{proof} By Proposition \ref{invarianti} we have $R^2= 2c_1 (\E)^2 -4c_2 (\E) +c_2 (\F)$. 
Since by Lemma \ref{index} it holds $c_1 (\sE)^2\ge R^2$, the claim follows.
\end{proof}

Now we can apply Theorem \ref{boundsc2f} to a factorised fibration.
\begin{cor} \label{formulaslosloslo} Let $f\colon S\to B$ be a genus $g$ fibration which 
factorises through a degree $4$ Gorenstein cover $\pi\colon S\to Y$ where 
$\pi_B\colon Y\to B$ is a ruled surface. Then
\begin{equation}\label{formulaslope}
\displaystyle s(f)\ge { {(g-1)\over (g+3)} c_1 (\sE)^2 \over {(g+1)\over 4(g+3)} c_1 (\sE)^2 - {1\over 4} c_2 (\sF)}.\end{equation}

\end{cor}
\begin{proof}
Consider the following function obtained in Corollary \ref{invariantiinvarianti}:
\begin{equation}\label{eq:pendenzaclassi}
s( c_1 (\sE)^2, c_2(\sF),c_2(\sE) ):= { {2(g+1)\over (g+3)} c_1(\sE)^2 - 4c_2 (\sE)+c_2 (\sF)  \over {(g+2)\over 2(g+3)} c_1 (\sE)^2 - c_2 (\sE)}.
\end{equation}

Observe that the partial derivative of $s( c_1 (\sE)^2, c_2(\sF),c_2(\sE) )$ with respect to 
$c_2(\sE)$ is positive if and only if $c_2(\sF) \ge {2\over (g+3)} c_1(\sE)^2$. This is always satisfied by Theorem 
\ref{boundsc2f} (1). Hence we can use the bound on $c_2(\sE)$ given in Lemma \ref{indexapplied}, and we conclude by applying Lemma \ref{indexapplied}.
\end{proof}


\subsection{Proof of the Main Theorem for factorised fibrations}
\label{momomo}

Now it is easy to see that if $S$ is a {\it{normal surface}} and $f\colon S \to B$ is a semistable fibration 
which factorises through a finite Gorenstein cover, the claims of the main theorem are 
straight consequence of Theorem \ref{brusac}, of Theorem \ref{boundsc2f}, of Corollary \ref{formulaslosloslo}, and the fact that such a bound is an increasing function in $c_2 (\F)$.\qed


\section{The main theorem}

\subsection{A theorem on $n$- gonal semistable fibrations}

Let $B\subset \mngon$ be a curve, which is not contained in the boundary divisor, and let $f:S \to B$ be the semistable fibration
associated with $B$.

%

We recall the following facts, which are well-known to the 
experts: 

\begin{thm}\label{fattorizzazione} Let $f\colon S\rightarrow B$ be a non isotrivial semistable 
fibration such that its general fiber is smooth non-hyperelliptic and of genus $g>2$.
     
 Then, by possibly replacing $B$ by a finite base-change, there exists a ruled surface $\pi_{B}\colon\mathbb 
  \mP(\B)\to B$ and a rational degree $n$ map $\rho\colon S\dashrightarrow \mP(\B)$, such that $f\colon S\to B$ is factorised through $\rho\colon S\dashrightarrow \mP(\B)$. 
 Finally, $S$ is a minimal surface of general type.
 \end{thm}

\begin{proof}
The proof follows by the theorem of semistable reduction and by standard facts of the theory of surfaces; see \cite[Section 2, The basic construction, page 337]{HMo}.
\end{proof}
By Theorem \ref{fattorizzazione}, in order to evaluate the slope 
of a non isotrivial semistable fibration with fiber an $n$-gonal curve it can be useful to study the following diagram:

\begin{equation}
\label{eq:familyB1}
\xymatrix{X \ar[d]_{\tau}
\ar[r]^{\pi}  &  \mP(\B) \ar[d]^{\pi_{B}}\\
S \ar[r]^{f} &  B,} 
\end{equation}\noindent
where $S$ is a minimal surface of general type, in particular $K_{S}$ 
is big and nef, and $\tau\colon X\to S$ is the minimal resolution of 
$\rho\colon S\dashrightarrow \mP(\B)$.
We denote by $Y$ the ruled surface $\mP(\B)$.

We observe now that the morphism 
${\widehat{\pi}}\colon {\widehat{X}}\to Y$ arising from the Stein factorisation of $\pi\colon X\to Y$ is not necessarily a Gorenstein cover. The non Gorenstein zero dimensional schemes
of degree four are isomorphic to one of the following (see \cite [Tables 6.1 and 6.2]{HaMi}:
$$
{\rm Spec} \ K \oplus {K[x,y]\over (x,y)^2}, \quad {\rm Spec} \ {K[x,y]\over (x^3,xy,y^2)}, \quad {\rm Spec} \ {K[x,y,z]\over (x,y,z)^2}.
$$
Hence the possible non Gorenstein fibers are supported on total or triple ramification points for the cover $\widehat \pi$, and since they are not curvilinear, they
are singular points of the corresponding fiber in the fibration $\widehat X \to B$. The stable model of such a fiber is either a curve with two or more nodes, or a curve with a node in $P$, and such that one or both branches of the curve are ramified 
in $P$. So such fibers occur on curves corresponding to non general points of the boundary divisor $\delta$. It follows that if the starting curve $B 
\subset \mfour$ does not interesect a certain proper subscheme $\Xi \subset \mfour$, the Stein factorisation of $\pi\colon X\to Y$ is a Gorenstein 
cover.

Nevertheless, it is possible to treat a more general case, by imposing some condition on the possible non Gorenstein fibers of $\widehat \pi:\widehat X \to Y$.
 

 From now on and with the notation of diagram \ref{eq:familyB1} we assume that the following condition holds:
   \medskip

   $(\star)$ the possible non Gorenstein points of $\widehat X$ occur only over simple nodes of $B(\pi)$.
    \medskip
    
     We point out that the sweeping families constructed by Harris and Morrison in \cite{HMo} satisfy our condition (cf. \cite{BeZ}).


  \begin{defn}\label{defonozioneHM}
 A {\it curve with good Gorenstein factorization} is a smooth curve $B\subset \mngon$ such that the condition $(\star)$ holds.
 \end{defn}

Next we shall determine the invariants of one dimensional families $B$ of fourgonal curves with good Gorenstein factorization. 
 
\subsection{The Gorenstein model}

We consider again the diagram (\ref{eq:familyB1}). Let $X\to{\widehat{X}}\to Y$ be the Stein factorisation of $\pi\colon X\to Y$. 
The technical side of Definition \ref{defonozioneHM} is that the singular points of ${\widehat{X}}$ which are not Gorenstein singularities occur only over the points of the branch locus
$B(\pi)$ of $\pi\colon X\to Y$ where $B(\pi)$ has a node. We want to replace the finite Stein morphism ${\widehat{\pi}}\colon {\widehat{X}}\to Y$ with a Gorenstein morphism 
${\widetilde{\pi}}\colon{\widetilde{X}}\to {\widetilde{Y}}$ where ${\widetilde{X}}$ is a Gorenstein model of ${\widehat{X}}$ and ${\widetilde{Y}}$ is obtained by suitable blow-ups of the ruled surface $Y$.

Let us study closely what happens over a node $p$ of $B(\pi)$ if the analytical germ of ${\widehat{X}}$ over a germ of $Y$ centered 
at $p$ has a non Gorenstein singularity. Essentially we need to understand the 
local behaviour of the  cover ${\widehat{\pi}}\colon {\widehat X} \to Y$ over an analytical 
neighboorhood of $p$ isomorphic to the polydisk $\Delta\subset\mathbb 
C^{2}$. We will follow \cite{L}. 
We know that if ${\widehat{\pi}}\colon {\widehat X} \to Y$ is a finite and dominant morphism
from a variety ${\widehat X}$ with isolated singularities to a smooth variety $Y$ and 
there is a simple normal crossing divisor $D$ in $Y$ such that ${\widehat{\pi}}\colon {\widehat X} \to Y$ is smooth
over $Y \setminus D$, then ${\widehat X}$ has algebraic abelian quotient 
singularities. In our case to analyse the behaviour at $p\in B(\pi)\subset Y$ we
can locally identify $B(\pi) \subset Y$ with $\{xy=0\} \subset {\mathbb{C}}^2_{x,y}$ and $p$ with $(0,0)$.
The local fundamental group of $\mC ^2_{x,y} \setminus \{xy=0\}$ near the origin is
free abelian of rank $2$, generated by loops around the two boundary
divisors. Thus, away from the branch locus, ${\widehat{\pi}}\colon {\widehat X} \to Y$ is locally analytically equivalent to a covering of the 
form ${\mathbb{C}}^2_{u,v}\setminus\{u
v=0\}\rightarrow{\mathbb{C}}^2_{x,y}\setminus\{x y=0\}$ 
determined by the corresponding morphism of fundamental groups. Since 
the degree is $4$, we have to consider subgroups of index $\leq 4$. Explicitly,
let $N={\mathbb{Z}}^2$ be the lattice of 1-parameter subgroups of the torus
$({\mathbb{C}}^{*})^2_{x,y}$.

Let $N' \subset N$
be a sublattice of index $4$. Then the
inclusion $N' \subset N$ corresponds to a degree $4$ toric morphism 
$Z\rightarrow{\mathbb {C}}^2$ ramified along the toric boundary 
divisors. Since the degree is $4$ the only case
where $Z$ is not Gorenstein is given by $Z= 1/4(1,1) = 
{\mathbb{C}}^2_{u,v}/({\mathbb{Z}}/4{\mathbb{Z}})$
with ${\mathbb{Z}}/4{\mathbb{Z}}$-action $(u,v) \mapsto (iu,iv)$, and $Z\rightarrow \mathbb 
C^{2}$ given by $(u,v) \mapsto (x,y)=(u^4,v^4)$. Note that in this case, 
above $p$ there exists a unique point $q\in{\widehat X}$ of total ramification.
Now if $\widetilde{Z} \rightarrow Z$ is the minimal resolution and 
$\widetilde{Y} \rightarrow {\mathbb {C}}^{2}$
is the blowup at $p$, then $\widetilde{Z} \rightarrow 
\widetilde{Y}$ is finite flat. Now we can glue the local partial resolution of the quotient 
singularities $q_1$,..., $q_s$ of type $1/4(1,1)$ inside ${\widehat {X}}$ to construct a partial resolution 
$\tau\colon \widetilde X \to {\widehat X}$ such $\widetilde X$ is Gorenstein over those singularities. Let ${\wdt{Y}}\to Y$ be the germ of the blow-up of $Y$ at the images of the point
$q_1$,..., $q_s$. There exists a germ of a morphism ${\wdt{\pi}}\colon{\wdt{X}}\to{\wdt{Y}}$ such that if $E_{q_i}$, $i=1,..., s$
 denote the corresponding exceptional divisors on $\widetilde X$ and if $E'_i := \wdt \pi (E_{q_i})$ for $i=1,\dots,s$, then it holds that 
\begin{enumerate}
\item $2K_{\widetilde 
X}=\tau^{\star}(2K_{X})-\sum_{i=1}^{s}E_{q_{i}}$ 
where each 
$E_{q_{i}}$ is a rational curve;
\item  $E_{q_{i}}^{2}=-4$ and 
$\widetilde \pi : E_{q_{i}}\to E $ is $4:1$ on $E_{q_{i}}$, $i=1,\ldots , s$;
\item
$E_{q_i} \cdot \widetilde R =6$ and $E_{q_i} \not\subset \widetilde R$,
where $\widetilde R$ denotes the ramification divisor of $\widetilde \pi$;
\item $K_{\widetilde X}= \tau ^\star K_{X} -{1\over 2} E_{q_i}$ as $\mathbb Q$- divisors and locally over $q_i$.
\end{enumerate}
For the last equation above note that as $\mathbb Q$- divisors we can write locally
$K_{\widetilde X}= \tau ^\star K_{X} +a E_{q}$ and
$
-2=E_{q}^{2} + a E_{q} ^{2} = -4-4a,
$ hence
$
a=-{1\over 2}.
$
\medskip

Now we perform the analysis of a sublattice $N' \subset N$ of index $3$.
This means that over the node $p\in B(\pi)$ we have 
an open set of ${\widehat{X}}$ which is mapped $1$-to-$1$ and another which is mapped $3$-to-$1$. The only non-Gorenstein possibility is 
given by $Z= 1/3(1,1) = 
{\mathbb{C}}^2_{u,v}/({\mathbb{Z}}/3{\mathbb{Z}})$. Denote by $E_{a}\subset 
{\widetilde{X}}$ the exceptional divisor over a non-Gorenstein point $a\in {\widehat{X}}$ of this kind. It follows that 
$E_{a}$ is an irreducible reduced rational curve with selfintersection
$E_{a}^{2}=-3$, and there is a $3$-to-$1$ cover $E_{a}\rightarrow E''$ of the exceptional curve 
$E''\subset{\widetilde{Y}}$ given by blowing-up at $p$. Moreover we have
$E_{a}\cdot\widetilde R=4$. Notice that to resolve the morphism we 
have to add a $-1$-curve $A_{a}\subset\widetilde X$ which is disjoint from $E_{a}$ and 
which is mapped $1$-to-$1$ to $E''\subset \widetilde Y$. Notice that 
$\widetilde{R}\cdot A_{a}=0$ but if we contract $A_{a}$ we can't 
factorise the cover through $\widetilde Y$. We shall denote by $E'' _1, \dots, E'' _t$ all the exceptional curves on $\wdt Y$ of this type obtained by the images of the point 
$a_1,..., a_t\in{\wdt{X}}$ of index $3$.

Finally, if $N' \subset N$
is a sublattice of index $2$, then we have that $\widehat X$ is 
Gorenstein above $p$.

The given description allows to pass from ${\widehat{\pi}} \colon {\widehat{X}}\to Y$ to a Gorenstein cover.

\begin{thm}\label{gorensteinfour} We use the above notation.
Let $\pi\colon X\rightarrow Y$ be a generically finite degree four cover between a normal surface $X$ and a ruled surface 
$\pi_B\colon Y\to B$ over a smooth curve $B$ such that $f_{X}\colon X\to B$ is a genus $g\geq 10$ fibration where 
$f_{X}:=\pi_B\circ\pi$. 
Let ${\widehat{\pi}}\colon {\widehat{X}}\to Y$ be the finite morphism of degree four given by the Stein factorisation $X\to{\widehat{X}}\to Y$ of 
$\pi\colon X\rightarrow Y$. Assume that the condition $(\star)$ holds for $\widehat{\pi}$.
 
 Then there exists a finite number of blow-ups 
$\sigma \colon \widetilde  Y \to Y$ and a partial resolution 
$\wdt \tau\colon \widetilde X\to {\widehat{X}}$ of $\widehat X$, such that $\widetilde X$ 
is Gorenstein and there exists an induced degree four Gorenstein cover 
$\widetilde \pi : \widetilde X \to \widetilde Y$ such that $\sigma\circ{\wdt{\pi}}={\widehat{\pi}}\circ{\wdt\tau}$.

Moreover, the possible non 
Gorenstein points of ${\widehat{X}}$ are total ramification points or 
ramification points of index three for ${\widehat{\pi}}\colon {\widehat{X}} \to Y$. Let 
$q_1$,..., $q_{s}\in{\widehat{X}} $ be the non Gorenstein points of total ramification, and let $a_1,...,a_{t}\in {\widehat{X}}$ be the non Gorenstein index $3$ ramification points. 
Then the ${\wdt{\pi}}$-relative canonical divisor $\widetilde R$ satisfies 
\begin{equation}\label{ingiusto}
{\widetilde \pi{ _\star}}{ \widetilde R} \equiv 2 \left((g+3) T_0 + {c_1 ^2 (\widetilde \sE)+9s+4t \over 2 (g+3)} \wdt L -3
\sum_{i=1}^s  E'_i -2\sum _{j=1}^t E''_j\right),
\end{equation}
where $T_0 = T_{\widetilde Y} - {1\over 2} T_{\widetilde Y} ^2 {\wdt{L}}$, $T_{\widetilde Y}$
is any section of ${\widetilde Y}$, $\wdt L$ is a general fiber of $\widetilde Y\to   B$,
$E'_i$, $i=1,...,s$ are $(-1)$-curves arising from the images of total ramification non-Gorenstein
points and $E''_j$, $j=1,..., t$ are $(-1)$-curves arising from the images of index $3$ non-Gorenstein points.
\end{thm}

\begin{proof} The analysis performed above on the non Gorenstein points of $\widehat X$ shows that $\sigma\circ{\wdt{\pi}}={\widehat{\pi}}\circ{\wdt\tau}$.

We show the formula for ${\widetilde \pi{ _\star}}{ \widetilde R}$. By Theorem \ref{scriviamolo} (\ref{R}) 
and by the fact that 
${\rm{NS}}_{\mathbb Q}(\wdt Y)=T_{0}\mathbb Q\oplus \wdt L\mathbb Q\oplus E'_1\mathbb Q 
\oplus...\oplus E'_s\mathbb Q \oplus E''_1\mathbb Q ...\oplus E''_t \mathbb Q$ we can write:

$$
c_1 (\widetilde \sE ) \equiv {1\over 2} \widetilde \pi_\star \widetilde R\equiv  (g+3)T_0
+d \wdt L +\sum_{i=1}^s a_i E'_i  +\sum _{j=1}^t b_j E''_j.
$$ 
Set $c_1 ^2:=c_1  ( \widetilde \sE)^2$.
We get
$$
d= {c_1 ^2 +\sum a_i ^2 +\sum b_j ^2\over 2(g+3)}.
$$
We recall that $\widetilde\pi ^\star E'_i= E_{q_i}$, $i=1,..., s$ and $ \widetilde\pi ^\star E''_j = E_{a_j} + A_{a_j}$, $j=1,..., t$. It follows that if $i\leq k\leq s$
$$
6= \widetilde R \cdot E_{q_k} =\widetilde R  \cdot \widetilde\pi ^\star E'_k=
\widetilde\pi_\star \widetilde R \cdot E'_k=
$$
$$
=2 \left( (g+3)T_0
+{c_1 ^2 +\sum_{i=1} ^s a_i^2 +\sum _{j=1}^t b_j^2 \over 2(g+3)} \wdt L +\sum_{i=1}^s a_i E'_i 
+\sum_{j=1}^t b_j E''_j \right) \cdot E'_k = -2a_k,
$$
hence
$
a_k =-3
$ for any $k=1,\dots,s$.
Similarly if $1\leq l\leq t$ we find
$$
4= \widetilde R \cdot (E_{a_l}+E'_{a_l}) =\widetilde R  \cdot \widetilde\pi ^\star E''_l=
\widetilde\pi_\star \widetilde R \cdot E''_l=
$$
$$
=2 \left( (g+3)T_0
+{c_1 ^2 +\sum_{i=1} ^s a_i^2 +\sum_{j=1}^t b_j^2 \over 2(g+3)} \wdt L+\sum_{i=1} ^s a_i E'_i 
+\sum_{j=1}^t b_j E''_j \right) \cdot E''_l = -2b_l,
$$
hence
$
b_l =-2
$ for any $l=1,\dots,t$.
\end{proof}

We can apply Theorem \ref{gorensteinfour} to the case given in diagram (\ref{eq:familyB1}). Actually we can prove a slightly more general theorem.

\begin{thm}\label{theoremsnfour} With the same assumptions as in Theorem
\ref{gorensteinfour}, the Gorenstein resolution $\wdt\tau\colon\wdt X\to{\widehat{X}}$ induces a fibration 
$\wdt f  \colon \wdt X \to  B$, which slope $s(\wdt f)$ satisfies the claims of the Main Theorem stated in the Introduction.
\end{thm}

\begin{proof} Since $B(\pi)$ satisfies the $(\star)$-condition then we can apply Theorem \ref{gorensteinfour} to ${\widehat{\pi}}\colon {\widehat{X}} \to Y$. Hence 
let $\wdt \tau\colon \widetilde X\to {\widehat{X}}$ be the partial resolution of $\widehat X$, such that $\widetilde X$ 
is Gorenstein. Again by Theorem \ref{gorensteinfour} there exists an induced degree four morphism $\widetilde \pi : \widetilde X \to \widetilde Y$ 
where $\widetilde Y\to Y$ is obtained by blowing up the images of the non Gorenstein index $3$ points or of non Gorenstein total ramification points of $\widehat X$.

By Proposition \ref{Casnati}, if $\wdt R=K_{\wdt X/\wdt Y}$ there exists an embedding $\wdt j\colon \wdt X\to  \mP (\wdt \sE)$ where 
$\sO _{\wdt X} (R) \cong j^\star \sO _{\mP (\wdt \sE)} (1)$. We denote by $\wdt \sF= ({{\wdt \pi} _{\mP}})_\star \sI _{\wdt X, \mP (\wdt \sE) }(2)$ the bundle of conics.
We recall that by the proof of Proposition \ref{slopegenerale} it also holds:
$$
\chi _{\wdt f}=\chi (\sO_{\wdt X}) - (g-1)(g(  B) -1)= 4\chi(\sO _{\wdt Y}) +
{1\over 2} c_1 (\wdt \sE) \cdot K_{\wdt Y} + {1\over 2} c_1 (\wdt \sE) ^2 - c_2 (\wdt \sE)-
(g-1)(g(  B)-1).
$$ 
By the equation (\ref{ingiusto}) obtained in Theorem \ref{gorensteinfour} we have:
$$
c_1 (\widetilde \sE) \equiv (g+3) T_0 + {c_1 ^2 (\widetilde \sE)+9s+4t \over 2 (g+3)} \wdt L -3
\sum_{i=1}^s  E_i -2\sum_{j=1}^t E''_j, \quad K_{\wdt Y} \equiv -2 T_0 +{\wdt f}^\star K_{  B} + \sum _{i=1}^s E'_i +\sum_{j=1}^t E''_j.
$$
Then
$$
c_1 (\widetilde \sE) \cdot K_{\wdt Y} = 2(b-1)(g+3) -{c_1 (\widetilde \sE) ^2 \over (g+3)} +3{g\over (g+3)} s + 2 {(g+1)\over (g+3)} t,
\quad K_{\widetilde Y} ^2= -8(g(\overline B-1)-s-t.
$$
It follows that:
\begin{equation}\label{eradafare}
K_{\wdt f}^2 = 2{(g+1)\over (g+3)} c_1 (\widetilde \sE)^2-4c_2 (\wdt \sE) + c_2 (\wdt \sF),
\end{equation}
and 
\begin{equation}\label{eradafare2}
\chi _{\wdt f} = {(g+2)\over 2(g+3)} c_1 (\wdt \sE )^2 - c_2 (\wdt \sE) + {3g\over 2 (g+3)} s +{(g+1)\over (g+3)} t
\end{equation}
As $\wdt X$ is normal, we may apply Lemma \ref{index} to get 
\begin{equation}\label{eradafare3}
c_2 (\wdt \sE) \ge {1\over 4} \Big   (  c_1 (\wdt\sE )^2 + c_2 (\wdt \sF)  \Big),
\end{equation}
which follows from the relation $\wdt R^2 = 2 c_1 (\wdt \sE)^2 - 4 c_2 (\wdt \sE) + c_2 (\wdt \sF)$ shown in Proposition \ref{invarianti}.

By equations (\ref{eradafare}), (\ref{eradafare2}) and by the inequality (\ref{eradafare3}) we obtain:
\begin{equation}\label{slopescoppiata}
s(\wdt f) \ge 4 + { c_2 (\wdt \sF) - {2\over (g+3)}  c_1 (\widetilde \sE)^2  \over 
{(g+1)\over 4(g+3)} c_1 (\widetilde \sE)^2-{1\over 4} c_2 (\wdt \sF)+ {3g\over 2 (g+3)} s +{(g+1)\over (g+3)} t}.
\end{equation}
Set 
$$
\upsilon(c_1 (\wdt \sE )^2, c_2 (\wdt \sF), s,t):=4 + { c_2 (\wdt \sF) - {2\over (g+3)}  c_1 (\widetilde \sE)^2  \over 
{(g+1)\over 4(g+3)} c_1 (\widetilde \sE)^2-{1\over 4} c_2 (\wdt \sF)+ {3g\over 2 (g+3)} s +{(g+1)\over (g+3)} t}.
$$
We observe that $\upsilon(c_1 (\wdt \sE )^2, c_2 (\wdt \sF), s,t)$ is an increasing function in $c_2 (\wdt\F)$. 

Now $c_2 (\wdt\F)$ can be bounded
using similar arguments as in the ruled case. The claim follows by next Proposition \ref{boundsc2scoppiatafour}, as the resulting expressions
are all bounded from below by the same expressions but evaluated in $s=t=0$.
\end{proof}

\begin{prop} \label{boundsc2scoppiatafour}
Let $B\subset\mfour$, and assume that
$B\not\subseteq T$, $B\not\subseteq D$.

Provided that the cover $\widetilde \pi : \widetilde X \to \widetilde Y$
satisfies
condition $(\dagger)$ in cases $(1)$, $(2)$ and $(3)$ below, the second Chern class of the bundle of conics $\wdt \sF$ is bounded by:
\begin{enumerate}
\item $c_2 (\wdt\F) \ge {2\over (g+3)} \left( c_1 (\wdt\E)^2 +9 s+4t \right)$;

\item $c_2 (\wdt\F) \ge {1\over 6} \left( c_1 (\wdt\E)^2 +9 s+4t \right)$ 
if $\pi _{b} \colon F_b \to \mP ^1$ does not factorise, where $F_b$ a general fiber of $\wdt f\colon\wdt X \to B$;

\item $\displaystyle c_2 (\wdt\F) \ge {(g+2)\over 4 (g+3)} \left( c_1 (\wdt\E)^2 +9 s+4t \right)$
 if $g$ is even and $B\not\subset{\rm CE} (\mfour)$;
\item
$c_2 (\wdt\F) \ge {1\over 4} \left( c_1 (\wdt\E)^2 +9 s+4t \right)$ 
if $g$ is odd and $B\not\subset{\rm CE} (\mfour)$.
\end{enumerate}
\end{prop}

\begin{proof} We use Theorem \ref{boundsc2}. The proof is similar to the proof of Theorem \ref{boundsc2f}, taking into account that we can still use for the case of the blow-up surface $\wdt Y$ 
the bounds on $\beta$ given in Corollary \ref{gensplit}.
\end{proof}

\subsection{Proof of the main theorem}\label{laprovavava}

We can finally conclude with our main result. Indeed fix a curve $B\subset\mfour$ as in the main theorem and consider the diagram \ref{eq:familyB1}. Let ${\widehat{\pi}}\colon \widehat X \to Y$ be the finite morphism to the ruled surface $\pi_B\colon Y\to   B$ obtained by the Stein factorisation $X\to{\widehat{X}}\to Y$ of the generically finite morphism $\pi\colon X\to Y$.
The surface $\widehat X$ is normal; see, for instance, \cite[Example 2.1.15, page 126]{La}. By assumption, $B$ is a curve with good Gorenstein factorisation, hence the morphism ${\widehat{\pi}}\colon {\widehat{X}}\to Y$ satisfies the $(\star)$ condition. Then we can apply 
Theorem \ref{theoremsnfour} to ${\widehat{\pi}}\colon{\widehat{X}}\to Y$ and we obtain a fibration ${\wdt{f}}\colon{\wdt{X}}\to  B$ such that the claims hold for the slope of ${\wdt{f}}\colon{\wdt{X}}\to  B$. 

Now we have that ${\wdt{X}}$ is a Gorenstein model of $S$ and by Theorem \ref{fattorizzazione}, $S$ is a minimal surface. Observe that ${\wdt{X}}$ has only canonical singularities;
indeed, $\widetilde X$ coincides with $X$ 
outside the points over the nodes of the branch locus. Moreover, by our resolution 
process the surface $\widetilde X$ has only rational Gorenstein singularities, which
are canonical.

Therefore $K^2_S\geq K^2_{\wdt X}$. This implies that $s(f)\geq s(\wdt f)$ and all the claims of the main theorem follow.\qed

\begin{rem}\label{tesipatel}
A. Patel recently proved in his Ph. D. thesis that
$g\equiv 3 \mod 6$, the slope of a sweeping $4$-gonal family not contained entirely in the divisors $T$, $D$, ${\rm CE} (\mfour)$ and the Maroni divisor 
$M(\mfour)$ consisting of curves with non balanced reduced direct image sheaf, is
bounded below as $s(f)\ge {11\over 2} - {15\over 2g}$. In this particular numerical case, the reduced direct image sheaf of the general fiber
has splitting type $(g+3)/3,(g+3)/3,(g+3)/3$, so the reduced direct image sheaf $\sE$ arising in our construction has non negative Bogomolov discriminant, by \cite[Theorem 2.2.1]{Mo}. The inequalities
$$
c_2(\sE) \ge {1\over 3} c_1 (\sE) ^2, \ c_2(\sF) \ge {1\over 4} c_1 (\sE) ^2,
$$ 
together with our formula \ref{slope4} give the desired bound.
\end{rem}

\end{document}